\newtheorem{theorem}{Theorem}
\newtheorem{corollary}[theorem]{Corollary}
\newtheorem{proposition}[theorem]{Proposition}
\newtheorem{lemma}[theorem]{Lemma}
\theoremstyle{definition}
\newtheorem{definition}[theorem]{Definition}
\newtheorem{remark}[theorem]{Remark}
\newcommand{\assign}{:=}
\newcommand{\llangle}{{\langle\!\langle}}
\newcommand{\nobracket}{}
\newcommand{\rrangle}{{\rangle\!\rangle}}
\newcommand{\tmtextit}[1]{{\itshape{#1}}}
\newcommand{\WW}{\mathbb{W}}
\newcommand{\PP}{\mathbb{P}}
\def\P{{\mathbb P}}
\def\N{{\mathbb N}}
\def\eps{\varepsilon}
\newcommand{\ep}{\varepsilon}
\def\command@factory#1{%
\expandafter\def\csname b#1\endcsname{\mathbf{#1}}
\expandafter\def\csname cl#1\endcsname{\mathcal{#1}}
}
\newcommand{\tmop}[1]{\ensuremath{\operatorname{#1}}}
\newcommand{\cdummy}{\cdot}
\begin{document}

\title{Global well-posedness of the 3D Navier--Stokes equations perturbed by a deterministic vector field}

\author{Franco Flandoli\footnote{Email: franco.flandoli@sns.it. Scuola Normale Superiore of Pisa, Piazza dei Cavalieri 7, 56124 Pisa, Italy.}\quad
Martina Hofmanov\'{a}\footnote{Email: hofmanova@math.uni-bielefeld.de. Fakult\"{a}t f\"{u}r Mathematik, Universit\"{a}t Bielefeld, Postfach 100131, D-33501 Bielefeld, Germany. M.H. gratefully acknowledges the financial support by the German Science Foundation DFG via the Collaborative Research Center SFB 1283 and the Research Unit FOR 2402.}\quad
Dejun Luo\footnote{Email: luodj@amss.ac.cn. Key Laboratory of RCSDS, Academy of Mathematics and Systems Science, Chinese Academy of Sciences, Beijing, China and School of Mathematical Sciences, University of the Chinese Academy of Sciences, Beijing, China.}\quad
Torstein Nilssen\footnote{Email: torstein.nilssen@uia.no. Institute of Mathematics, University of Agder, Norway.}}

\maketitle

\begin{abstract}
We are concerned with the problem of global well-posedness of the 3D Navier--Stokes equations on the torus with unitary viscosity.
While a full answer to this question seems to be out of reach of the current techniques, we establish a regularization by a deterministic vector field. More precisely, we consider  the vorticity form of the system perturbed by  an additional transport type term. Such a perturbation conserves the enstrophy and therefore a priori it does not imply any smoothing. Our main result is a construction of a deterministic vector field $v=v(t,x)$ which  provides the desired regularization of the system and yields global well-posedness for large initial data outside arbitrary small sets. The proof relies on probabilistic arguments developed  by Flandoli and Luo, tools from rough path theory  by Hofmanov\'a, Leahy and Nilssen and a new Wong--Zakai approximation result, which itself combines probabilistic and rough path techniques.

\end{abstract}

\textbf{Keywords:} 3D Navier-Stokes equations, vorticity form, well-posedness, regularization by noise, Wong--Zakai principle

\section{Introduction}

The problem of global well-posedness of the three dimensional Navier--Stokes system  describing  flows of incompressible fluids remains an outstanding open problem of great interest. Recently, it experienced a major breakthrough due to Buckmaster and Vicol \cite{BuVi} and Buckmaster, Colombo and Vicol \cite{BuCoVi}, who were able to prove nonuniqueness in a class of weak solutions. The~result followed a series of important works on the Euler equations by Buckmaster, De~Lellis, Isett, Sz\'ekelyhidi Jr. and Vicol \cite{BDSV, DelSze2,DelSze3,DelSze13, Ise} proving among others the Onsager conjecture.
The solutions to the Navier--Stokes system constructed by Buckmaster and Vicol \cite{BuCoVi} do not satisfy the corresponding energy inequality, i.e., they are not the so-called Leray solutions. Therefore, they  can be regarded as nonphysical. The problem of uniqueness of Leray solutions is one of the major challenges in the mathematical fluid dynamics research.

The systems of Navier--Stokes and Euler equations are derived from the basic physical principles. However, the derivation proceeds under a number of simplifying assumptions and in particular many physical parameters are neglected; this might be the potential reason for ill-posedness of weak solutions mentioned above. These considerations partly explain the motivation of introducing random perturbations to the equations and lead to the theory of regularization by noise, with the hope that noise may restore well-posedness of the systems. On the contrary, in the present paper we give an affirmative answer to the following question:

\begin{quotation}
\noindent\emph{Is there an additional enstrophy preserving deterministic term which provides global well-posedness for the 3D Navier--Stokes equations?}
\end{quotation}

Our guiding principle is the enstrophy conservation of the  added term together with the fact that the equations shall remain deterministic. More precisely, even though our result is in the spirit of \emph{regularization by noise} and our proof makes an essential use of probabilistic arguments, the~constructed perturbation is deterministic. Furthermore, the perturbation is driven by a  vector field which is a time derivative of a highly oscillatory but an explicit piecewise linear function. As a consequence, the result can be  further underlined by numerical simulations, which is one of the reasons why we aimed for the piecewise linear setting and  deterministic vector fields. However, by a slight modification, our proof yields regularization by smooth deterministic vector fields as well, see Remark~\ref{rem:approx}.

\bigskip

Consider the 3D Navier--Stokes equations on the torus $\bT^3$ in vorticity form%
\begin{equation}
\partial_{t}\xi+\mathcal{L}_{u}\xi=\Delta\xi\label{det NS1}%
\end{equation}
with an initial condition $\xi_{0}\in H$. Here $u$ and $\xi$ are the velocity and vorticity of the fluid, respectively, and $\mathcal{L}_{u}\xi= u\cdot \nabla \xi- \xi \cdot \nabla u$ is the Lie derivative. We write $H$ for the space of square integrable divergence free vector fields on $\bT^3$, see Section \ref{subsec-funct-spaces} for details.  Without loss of generality, we
focus on unitary viscosity. On the formal level, one can study the time evolution of the enstrophy and derive  the inequality
\begin{equation}
\frac{d}{d t} \Vert \xi (  t ) \Vert _{H}^{2}\leq
C_{1} \Vert \xi (  t ) \Vert _{H}^{6}.\label{energy est}%
\end{equation}
This is the key inequality  which provides a local bound yielding the maximal existence and uniqueness of a
solution of class $C\left(  [0,\tau);H\right)  $. The final time $\tau$ is not known to
be infinite or finite. We have denoted the constant by $C_{1}$ to remind
ourselves that this is a constant coming from unitary viscosity. The only
available lower bound on $\tau$ due to (\ref{energy est}) is a finite value
$\tau^{\ast}\left(  C_{1},\left\Vert \xi_{0}\right\Vert _{H}\right)  $
depending on $C_{1}$ and $\left\Vert \xi_{0}\right\Vert _{H}$.

Let  $v=v\left(  t,x\right)  $ be a given vector field, possibly random, periodic and divergence free; we consider the following modified model:%
\begin{equation}
\partial_{t}\xi+\mathcal{L}_{u}\xi=\Delta\xi+\Pi(v\cdot\nabla\xi
),\label{NS random}%
\end{equation}
where $\Pi$ is the Leray projection from $L^2(\bT^3,\bR^3)$ to $H$. The projection $\Pi$ is necessary since all the other terms are divergence free. We stress that the added perturbation is of transport type. Therefore, for a general vector field $v$, it does not have any smoothing effect, nor does it extend the lifespan of solutions. Indeed, due to the divergence-free constraint for $v$ it follows
\[
\left\langle \Pi(v\cdot\nabla\xi),\xi\right\rangle_H =0.
\]
Hence, the energy type estimate on $\Vert \xi(  t) \Vert^{2} _{H}$
for this model is the same as above, namely (\ref{energy est}) with the same
constant $C_{1}$. Hence, a priori, the only available lower bound on the
maximal time of well-posedness in $H$ is the same value $\tau^{\ast}\left(
C_{1},\left\Vert \xi_{0}\right\Vert _{H}\right)  $ as  above.

The main result of this paper significantly improves what simple energy type estimates can do. To state the next theorem, we take an arbitrary probability measure $\mu$ on the closed ball $B_H(K)= \{\xi_0\in H: \|\xi_0 \|_H\leq K\}$, and denote by $\tau_v(\xi_0)$ the life time of the solution to \eqref{NS random} starting from $\xi_0$.

\begin{theorem}\label{thm:main}
Let $K,T>0$ be given. For any $\ep>0$, there is a deterministic vector field $v$ such that
  $$\mu(\{\xi_0\in B_H(K): \tau_v(\xi_0)\geq T\}) \geq 1-\ep.$$
In other words, the collection of those initial data $\xi_{0}\in B_H(K)$ such that the system \eqref{NS random} is well-posed on $[0,T]$ has a $\mu$-measure greater then $1-\ep$.
\end{theorem}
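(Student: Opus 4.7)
The plan is to combine the three ingredients announced in the abstract --- a Flandoli--Luo scaling limit, a rough-path Wong--Zakai approximation in the spirit of Hofmanov\'a--Leahy--Nilssen, and a probabilistic selection argument. The vector field $v$ will be produced as (the time derivative of) a single realization of a piecewise-linear interpolation of an auxiliary Brownian transport noise whose scaling limit yields the enhanced diffusion that cures the supercritical growth in \eqref{energy est}.

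First, I would introduce the stochastic vorticity equation
\[
d\xi + \mathcal{L}_u \xi \, dt = \Delta \xi \, dt + \sum_k \Pi(\sigma_k \cdot \nabla \xi) \circ dW_k ,
\]
with divergence-free coefficients $\{\sigma_k\}$ and independent Brownian motions $\{W_k\}$, chosen in the Flandoli--Luo manner so that the It\^o--Stratonovich corrector produces, in the high-mode small-amplitude limit, an additional isotropic Laplacian $\nu \Delta \xi$. A quantitative scaling-limit statement in the spirit of Flandoli--Luo then provides, for the given $\varepsilon$ and $K$, coefficients $\{\sigma_k\}$ and a parameter $\nu = \nu(K,T,\varepsilon)$ such that the stochastic solution exists on $[0,T]$ with probability at least $1 - \varepsilon/2$, uniformly over $\xi_0 \in B_H(K)$. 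The key point is that the deterministic limit equation $\partial_t \xi + \mathcal{L}_u \xi = (1+\nu)\Delta \xi$ is globally well-posed on $[0,T]$ for every $\xi_0 \in B_H(K)$ once $\nu$ is large enough, because the effective constant in \eqref{energy est} decreases with $\nu$.

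Second, I would replace each $W_k$ by its piecewise linear interpolant $W_k^n$ on a uniform mesh of size $1/n$. The resulting equation is exactly of the form \eqref{NS random} with the deterministic (given $\omega$) vector field $v^{n,\omega}(t,x) = \sum_k \sigma_k(x) \dot{W}_k^n(t,\omega)$, realizing precisely the piecewise-linear-in-time, highly oscillatory structure promised in the introduction. The new Wong--Zakai theorem advertised in the abstract, based on rough-path stability, then ensures convergence in probability of the Wong--Zakai solutions to the Stratonovich solutions in a topology fine enough to transport the lifetime bound $\tau \geq T$; taking $n$ large absorbs a further $\varepsilon/2$ in probability. Integrating against $\mu$ and applying Fubini to $\mu \otimes \mathbb{P}$ then yields an event $A$ of positive probability on which $\mu\{\xi_0 \in B_H(K) : \tau_{v^{n,\omega}}(\xi_0) \geq T\} \geq 1-\varepsilon$, so that any $\omega^\ast \in A$ supplies the desired $v := v^{n,\omega^\ast}$.

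The main obstacle is the uniformity in $\xi_0$ needed for this Fubini exchange. The Flandoli--Luo scaling limit and the Wong--Zakai approximation are stability statements for the solution map with respect to the driver, and classically hold at fixed initial data; here both must be made quantitative and uniform over the ball $B_H(K)$, which in turn requires uniform-in-$\xi_0$ moment bounds on the enstrophy along both approximations. The rough-path framework is unavoidable for this step because the approximating transport driver is highly oscillatory and the limit equation is of transport type, so classical PDE stability with respect to the coefficient $v$ breaks down --- continuity must be established at the level of the rough driver. A secondary, more bookkeeping difficulty is the joint tuning of the three parameters (the number of modes of $\sigma_k$, the amplitude producing $\nu$, and the mesh size $1/n$), which must be organized so that the two successive limits can be compressed into a single realization without one destroying the other's gain.
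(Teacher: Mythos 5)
Your overall strategy matches the paper's proof of Theorem~\ref{thm:main}: first the Flandoli--Luo scaling limit to the deterministic equation with enhanced viscosity, then a uniform-in-$\xi_0$ Wong--Zakai theorem for piecewise-linear interpolants, and finally a Fubini argument on $\mu\otimes\PP$ to extract a single $\omega^\ast$ (this is exactly Proposition~\ref{prop:thm1b}). The parameter bookkeeping (modes $N$, amplitude yielding $\nu$, mesh $1/n$) is organized in the paper through the cut-off equation~\eqref{SPDE with cut-off} and the bound~\eqref{bound-vorticity}, which your plan omits but could accommodate.

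The one substantive gap is in your description of the Wong--Zakai step as a matter of ``rough-path stability.'' Rough-path continuity of the solution map would indeed transfer the lifetime bound, but it presupposes uniqueness of the rough-path formulation of~\eqref{eq:rp3}, and precisely that uniqueness is unavailable here: the Leray projection obstructs the tensorization technique of~\cite{DeGuHoTi16}, so a pathwise compactness argument only yields an $\omega$-dependent subsequential limit and loses measurability in $\omega$. The paper therefore does \emph{not} run a pathwise stability argument. Instead it extracts uniform-in-$n$ estimates from the rough-path a priori bounds (Proposition~\ref{prop:apriori}), applies the Skorokhod representation theorem to the joint law of $(\xi^{R,n},W^n,\mathbb W^n)$, identifies the limiting enhanced noise as the Stratonovich lift of a Brownian motion by a delicate martingale argument (complicated further by the fact that $W^n$ is adapted only to $(\mathcal F_{t+h^n})$), and only then invokes pathwise uniqueness of the \emph{stochastic} equation~\eqref{SPDE with cut-off} together with a Gy\"ongy--Krylov argument to get convergence in probability on the original space. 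Without this detour through stochastic compactness, the Wong--Zakai convergence in probability --- and hence the Fubini selection you invoke --- does not follow.
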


Theorem \ref{thm:main} is the first result proving   a regularization by \emph{deterministic} vector fields for the 3D Navier--Stokes equations. Unfortunately, we are unable to show that there exists a deterministic vector field $v$ such that, for any $\xi_0 \in B_H(K)$, the equation \eqref{NS random} has a global solution on $[0,T]$, see Remark \ref{subsec-3-rem} for some discussions. By a different method the authors in \cite{Iyer} obtained a regularization by a deterministic transport term for several classes of equations. However, due to the Leray projection, the Navier--Stokes system is more delicate and the method does not apply.

Several results exist on regularization by
noise where the noise is white in time and multiplication operations are of
Stratonovich type. Such results are especially appreciated when Stratonovich models are
accepted as an idealization of real models, that is,  a fast varying term is replaced by white noise in time. However, for practical purposes as well as numerical simulations  this is not satisfactory since one is forced to go back and replace white noise by a suitable smooth approximation. In other words, certain stability with respect to the driving noise is necessary which translates to the so-called Wong--Zakai principle.
Our proof of  Theorem \ref{thm:main} is motivated by these considerations.
More precisely, two ingredients are needed: a stochastic regularization by noise and a Wong--Zakai principle.

On the one hand, in a previous work of two members of our team, namely Flandoli and Luo \cite{FL19}, regularization by a transport  noise of Stratonovich type was established for the 3D Navier--Stokes system. Remarkably, by a very delicate argument it was possible to show that a suitable noise increases the dissipation of the system with a large probability. Practically, this  translates to an increased viscosity which in turn extends the lifespan of the solution. We refer to Section~\ref{ss:formul} for a more detailed discussion of the results of \cite{FL19}.

On the other hand, the Wong--Zakai principle for stochastic partial differential equations became significantly more accessible by the recent advances in the  theory of rough paths. In the context of the Navier--Stokes system, the theory was developed by the other two members of our author team, namely Hofmanov\'a and Nilssen (together with Leahy) \cite{HLN,HLN19}. In these works, certain rough perturbations of transport type were included in the model and existence of solutions was proved, proceeding by a mollification of the noise.
Additional results including the Wong--Zakai principle were obtained in the two dimensional setting.

It turns out that the noise which provides regularization in \cite{FL19} was not treated in \cite{HLN,HLN19}. More importantly, it is not clear how to obtain the necessary Wong--Zakai principle in the three dimensional setting
directly by the techniques of \cite{HLN,HLN19}. Indeed,  to this end it would be necessary to establish at least local-in-time uniqueness of strong solutions to the rough path formulation of the equations. Otherwise one could only formulate a certain Wong--Zakai principle up to a subsequence which depends on the randomness variable $\omega$. Thus, measurability with respect to $\omega$ may be lost.

To this time we are not able to prove the necessary uniqueness  in the rough path setting. Thus, we proceed differently. The  idea is to make use of the corresponding uniqueness result in the stochastic setting, which is surprisingly easy to establish. To this end, a further combination of rough path techniques with the stochastic compactness method based on the Skorokhod representation theorem is necessary. Especially, the identification of the limiting equation in our main Proposition~\ref{prop-intermediate} below manifests the nice interplay between probabilistic arguments based on the martingale theory and pathwise arguments relying on the  theory of rough paths.

As an intermediate result towards the proof of Theorem~\ref{thm:main} we obtain the following statement which reflects the probabilistic nature of our construction and which is interesting in its own right.

\begin{theorem}\label{thm:main1}
Given $K,T,\varepsilon>0$,
there is a random vector field $v= v(\omega, t ,x)$ (see the formula below Corollary \ref{coro-1}), such that for every $\xi_{0}\in B_H(K)$, the maximal time of well-posedness in $H$ for equation \eqref{NS random}  with initial condition
$\xi_{0}$ is greater than $T$, with probability $1-\varepsilon$.
\end{theorem}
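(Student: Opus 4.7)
The plan is to construct the random vector field $v$ as a Wong--Zakai smoothing of the regularizing transport noise from \cite{FL19}. Fix a finite collection $\{\sigma_k\}_{k\in K}$ of smooth divergence-free vector fields on $\bT^3$ concentrated at high frequencies, with magnitudes tuned as in \cite{FL19} to produce enhanced dissipation in the limit. Let $(W^k)_{k\in K}$ be independent real Brownian motions on a filtered probability space, and for $N \in \mathbb{N}$ define
\[
v^N(\omega,t,x) = \sum_{k\in K} \sigma_k(x)\, \dot W^{k,N}_t(\omega),
\]
where $W^{k,N}$ is the piecewise linear interpolation of $W^k$ with mesh $1/N$. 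For each $\omega$, $v^N$ is a smooth, bounded, divergence-free time-dependent field, so \eqref{NS random} with $v=v^N$ is a classical parabolic PDE admitting a unique local strong solution $\xi^N \in C([0,\tau^N);H)$ by standard theory. The random field promised in the statement is this $v^N$ for $N$ chosen large in the final step.

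The essential analytical step is a quantitative Wong--Zakai principle: as $N \to \infty$, $\xi^N$ converges in a suitable sense to the solution $\xi$ of the Stratonovich SPDE
\[
d\xi + \mathcal{L}_u \xi\, dt = \Delta \xi\, dt + \sum_{k\in K} \Pi(\sigma_k \cdot \nabla \xi) \circ dW^k
\]
studied in \cite{FL19}. To obtain this, one first localizes using a stopping time of the form $\tau^N \wedge \inf\{t\le T : \|\xi^N(t)\|_H \ge M\}$ to secure uniform enstrophy bounds. Then one establishes tightness of the laws of the triples $(\xi^N, W, \mathbb{W}^N)$ in an appropriate Polish space, where $\mathbb{W}^N$ encodes the iterated integrals needed for the rough path lift, and applies Skorokhod's representation theorem to upgrade to almost sure convergence on a new probability space. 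The rough path framework of \cite{HLN,HLN19} controls the nonlinear transport term $v^N \cdot \nabla \xi^N$, whose 3D regularity is too low for a purely analytic treatment; the It\^o--Stratonovich correction in the limit is identified by a martingale characterization exploiting uniqueness in law for the Stratonovich formulation.

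The conclusion follows by chaining two high-probability events. By the regularization result of \cite{FL19}, the coefficients $\{\sigma_k\}$ can be tuned so that for every $\xi_0\in B_H(K)$ the Stratonovich solution $\xi$ exists on $[0,T]$ with probability at least $1-\varepsilon/2$, with bounds uniform in $\xi_0$. Combined with convergence in probability of $\xi^N$ to $\xi$ on this event, choosing $N$ large enough yields $\P(\tau^N > T) \geq 1-\varepsilon$ uniformly in $\xi_0 \in B_H(K)$, which is the desired conclusion with $v = v^N$.

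The main obstacle is the convergence step. In the 3D setting the rough path uniqueness of \cite{HLN,HLN19} is not available, so tightness alone cannot yield convergence of the full sequence. The resolution, which is the technical heart of the paper (cf.\ the identification performed in Proposition~\ref{prop-intermediate}), is to combine pathwise rough path estimates for the nonlinear transport term with a probabilistic martingale characterization of the Stratonovich integral in the Skorokhod limit, and then to lift the (comparatively easier) uniqueness available for the Stratonovich SPDE back to convergence in probability of $\xi^N$, which is what restores measurability and uniformity. Carrying the uniformity in $\xi_0 \in B_H(K)$ through these steps, by anchoring all estimates to $\|\xi_0\|_H$ rather than to the individual initial datum, is another point requiring care.
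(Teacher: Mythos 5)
Your overall architecture matches the paper's: combine the regularization result of \cite{FL19} for the Stratonovich SPDE with a Wong--Zakai convergence result proved by fusing rough path a~priori estimates, Skorokhod representation, a martingale identification of the Stratonovich lift, and pathwise uniqueness for the limit SPDE (lifted to convergence in probability on the original space via Gy\"ongy--Krylov), then chain the two high-probability events. This is exactly how the paper proceeds through Lemma~\ref{lem-1}, Theorem~\ref{wong zakai in law} (proved via Proposition~\ref{prop-intermediate}), and Corollary~\ref{coro-1}.

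There is one genuine gap, however, in how you propose to localize. You suggest controlling the solution via a stopping time $\tau^N \wedge \inf\{t\le T:\|\xi^N(t)\|_H\ge M\}$ before establishing tightness. This conflicts with the Skorokhod step: after passing to the new probability space, the stopped processes are defined on $\omega$-dependent random time intervals, the laws become measures on spaces of functions with random domains, and the stopping time structure is not preserved under the almost-sure convergence you obtain. The paper circumvents this by instead inserting the cut-off $f_R(\|\cdot\|_{H^{-\delta}})$ directly into the drift of equations \eqref{SPDE with cut-off} and \eqref{random PDE with cut-off}: this keeps every approximate solution globally defined on $[0,T]$ with deterministic, $n$-independent energy bounds (see \eqref{random PDE with cut-off energy bound}), so that tightness and the Skorokhod argument go through cleanly, and only afterwards is the cut-off removed on the high-probability event \eqref{bound-vorticity}. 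Note also that the cut-off must be taken in the weak topology $H^{-\delta}$ rather than in $H$, since that is the topology in which the \cite{FL19} convergence \eqref{convergence-in-probab} and the Wong--Zakai convergence in Theorem~\ref{wong zakai in law} are formulated; an $H$-norm cut-off as you wrote would not interface correctly with those estimates.
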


We point out  that the time regularity of the constructed vector field $v$ in Theorem~\ref{thm:main} is such that \eqref{NS random} is understood and solved in the classical deterministic sense. In other words, it is \emph{not} a stochastic partial differential equation. It will be seen in the proof below that $v$ involves the time derivative of a suitable regularization of some Brownian motions defined in Section \ref{ss:BM}.

The paper is organized as follows. In Section 2 we make some preparations concerning the functional analytical setting, the explicit choice of a complete orthonormal system of $H$, and the elements of rough path theory. In Section 3 we first recall the main results of \cite{FL19}, and then state in a more precise way the equations and a series of intermediate results needed for proving Theorem \ref{thm:main}. Section 4 is devoted to the proof of the Wong--Zakai approximation: Theorem \ref{wong zakai in law}, which is the main ingredient in the proof of Theorem \ref{thm:main}.

\section{Preliminaries}

\subsection{Function spaces}\label{subsec-funct-spaces}

For a given $m\in \bR$ and $d,D\in \bN$, we denote $W^{m,2}(\bT^d; \bR^{D})=(I-\Delta)^{-\frac{m}{2}}L^2( \bT^d ; \bR^{D})$.
We denote by $H^{m}$ the subspace of $W^{m,2}(\bT^{d};\bR^{d})$  consisting of divergence free vector fields, i.e.,
$$
H^m=\left\{f\in W^{m,2}(\bT^{d};\bR^{d}); \;\nabla \cdot  f= 0\right\},
$$
and let $\|\cdot\|_{H^{m}}$ be the corresponding norm. We write $H$ for $H^{0}$.
In order to analyze the convective  term in the  Navier--Stokes system, we employ the classical notation and bounds.
Owing to Lemma~2.1 in \cite{RT83}, the trilinear form
$$
b(u,\varv,w)= \int_{\bT^d} ((u\cdot \nabla)\varv)\cdot w \, \, d x=\sum_{i,j=1}^d\int_{\bT^d} u^i \partial_{x_{i}}\varv^j w^j \,\, d x
$$
satisfies the continuity property
\begin{equation}\label{trilinear form estimate}
|b(u,v,w)|\lesssim_{m_1,m_2,m_3,d}
\|u\|_{H^{m_1}}\|v\|_{H^{m_2+1}}\|w\|_{H^{m_3}}, \qquad m_1 + m_2 + m_3 > \frac{d}{2} , \quad m_1,m_2,m_3 \geq 0 .
\end{equation}
Moreover, for all $u\in  H^{m_1}$ and  $(\varv,w)\in W^{m_2+1,2}\times W^{m_3,2}$ such that $m_1,m_2,m_3$ satisfy \eqref{trilinear form estimate}, we have
\begin{equation}\label{eq:B prop}
b(u,\varv,w)=-b(u,w,\varv) \quad \textnormal{and} \quad b(u,\varv,\varv)=0.
\end{equation}

\subsection{A basis of $H$ and complex Brownian motions}
\label{ss:BM}

Recall that $\mathbb Z^3_0= {\mathbb Z}^3\setminus \{0\}$ is the nonzero lattice points. Let ${\mathbb Z}^3_0= {\mathbb Z}^3_+ \cup {\mathbb Z}^3_-$ be a partition of ${\mathbb Z}^3_0$ such that
  $${\mathbb Z}^3_+ \cap {\mathbb Z}^3_-= \emptyset, \quad {\mathbb Z}^3_+ = - {\mathbb Z}^3_-.$$
Let $L^2_0(\bT^3, \mathbf C)$ be the space of complex valued square integrable functions on $\bT^3$ with zero average. It has the complete orthonormal system:
  $$e_k(x)= {\rm e}^{2\pi {\rm i} k\cdot x}, \quad x\in \bT^3,\, k\in {\mathbb Z}^3_0,$$
where ${\rm i}$ is the imaginary unit. For any $k\in {\mathbb Z}^3_+$, let $\{a_{k,1}, a_{k,2}\}$ be an orthonormal basis of $k^\perp := \{x\in \bR^3: k\cdot x=0\}$ such that $\big\{a_{k,1}, a_{k,2}, \frac{k}{|k|} \big\}$ is right-handed. The choice of $\{a_{k,1}, a_{k,2}\}$ is not unique. For $k\in {\mathbb Z}^3_-$, we define $a_{k,\alpha} = a_{-k,\alpha}$, $\alpha=1,2$. Now we can define the divergence free vector fields:
  \begin{equation}\label{vector-fields}
  \sigma_{k,\alpha}(x) = a_{k,\alpha} e_k(x), \quad x\in \bT^3,\,  k\in {\mathbb Z}^3_0,\, \alpha=1,2.
  \end{equation}
Then $\big\{\sigma_{k,1}, \sigma_{k,2}:  k\in {\mathbb Z}^3_0 \big\}$ is a CONS of the subspace $H_{\mathbf C} \subset L^2_0(\bT^3,\mathbf C^3)$ of square integrable and divergence free vector fields with zero mean. A vector field
  $$v= \sum_{k\in \mathbb Z^3_0} \sum_{\alpha=1}^2 v_{k,\alpha} \sigma_{k,\alpha} \in H_{\mathbf C}$$
has real components if and only if $\overline{v_{k,\alpha}} = v_{-k, \alpha}$.

Next we introduce the family $\big\{W^{k,\alpha}: k\in {\mathbb Z}^3_0, \alpha=1,2 \big\}$ of complex Brownian motions. Let
  $$\big\{B^{k,\alpha}: k\in {\mathbb Z}^3_0,\, \alpha=1,2 \big\}$$
be a family of independent standard real Brownian motions; then the complex Brownian motions can be defined as
  $$W^{k,\alpha} = \begin{cases}
  B^{k,\alpha} + {\rm i} B^{-k,\alpha}, & k\in  {\mathbb Z}^3_+;\\
  B^{-k,\alpha} - {\rm i} B^{k,\alpha}, & k\in  {\mathbb Z}^3_-.
  \end{cases}$$
Note that $\overline{W^{k,\alpha}}= W^{-k,\alpha}\, (k\in{\mathbb Z}^3_0, \alpha=1,2)$, and they have the following quadratic covariation:
  \begin{equation}\label{qudratic-var}
  \llangle W^{k,\alpha}, W^{l,\beta} \rrangle_t= 2\, t\, \delta_{k,-l}\, \delta_{\alpha, \beta},\quad k,l\in {\mathbb Z}^3_0,\, \alpha, \beta\in \{1,2\} .
  \end{equation}

\subsection{Elements of rough paths theory} \label{ss:rp}

Let $I\subset \bR$ be a bounded interval  and let $E$ be a Banach space with a norm $\| \cdot \|_E$. For a path $g:I\to E$ we define its increment as $\delta g_{s t}:=g_{t}-g_{s}$, $s,t\in I$. Let
$\Delta_I := \{(s,t)\in I^2: s\le t\}$. For a two-index map $g: \Delta_{I}\rightarrow E$, we define the second order increment operator $\delta g_{s \theta t} = g_{st} - g_{\theta t} - g_{s \theta}$, $s \leq \theta \leq t.$

Let $\alpha>0$. We denote by $C_{2}^{\alpha}(I; E)$ the closure of the set of smooth 2-index maps $g: \Delta_I \rightarrow E$ with respect to the seminorm
$$
[g]_{\alpha}:=[g]_{\alpha,I,E}:=\sup_{s,t\in \Delta_{I},s\neq t}\frac{\|g _{s t}\|_E}{|t-s|^{\alpha}}<\infty.
$$
By $C^{\alpha}_{2,\rm{loc}}(I;E)$ we denote the space of 2-index maps $g:\Delta_{I}\to E$ such that there exists a  covering $\{I_{k}\}_{k}$ of the interval $I$ so that $g\in C^{\alpha}_{2}(I_{k};E)$ for every $k$.
By $C^{\alpha}(I; E)$ we denote the closure of the set of smooth paths $g : I \rightarrow E$ with respect to the seminorm $[\delta g]_{\alpha}$.
Note that with this definition, the spaces $C^{\alpha}(I; \bR^{m})$ and $C_{2}^{\alpha}(I; \bR^{m})$, $m\in\bN$, are Polish.

Next, we present the definition of a rough path. A detailed exposition of rough path theory can be found in \cite{FrHa14}.

\begin{definition}\label{defi-rough-path}
Let $T>0$, $m \in\bN $ and $\alpha\in(\tfrac13,\frac12]$. An $\alpha$-rough path is a  pair
\begin{equation}\label{p-var-rp}
\bZ=(Z, \mathbb{Z}) \in C_{2}^{\alpha} ([0,T];\bR^{m}) \times C^{2\alpha}_2 ([0,T]; \bR^{m\times m})
\end{equation}
satisfying the Chen's relation
\begin{equation*}\label{chen-rela}
\delta \mathbb{Z}_{s\theta t}=Z_{s\theta} \otimes Z_{\theta t},   \qquad  s \leq \theta \leq t.
\end{equation*}
Given a smooth path $z$, there is a canonical lift to a rough path $(Z, \mathbb{Z})$ given by
$$Z_{st}:= \delta z_{st} \quad \textnormal{ and } \quad \mathbb{Z}_{st}:=\int_s^t \delta z_{s r} \otimes \dot{z}_{r}\, d r ,$$
for which Chen's relation is readily checked.
An $\alpha$-rough path $\mathbf{Z}=(Z, \mathbb{Z})$ is said to be geometric if it can be obtained as the limit in the  product topology $C_{2}^{\alpha} ([0,T];\bR^{m}) \times C^{2\alpha}_2 ([0,T]; \bR^{m\times m})$  of a sequence of rough paths  $\{(Z^{n},\mathbb{Z}^{n})\}_{n=1}^{\infty}$
which are canonical lifts of some smooth paths $z^n:[0,T] \to \bR^m$.
\end{definition}

We proceed with a definition of an unbounded rough driver, which can be regarded as an operator valued rough path taking values in unbounded operators. In view of the application to the Navier--Stokes system we work directly with the scale of Hilbert spaces $H^{n}$ defined in Section~\ref{subsec-funct-spaces}.

\begin{definition}
\label{def:urd}
Let $T>0$ and $\alpha\in (\frac13,\frac12]$. An unbounded $\alpha$-rough driver is a pair $\mathbf{A} = (A^1,A^2)$ of $2$-index maps satisfying:
there exists a constant $C_{A}>0$ such that for every $0\le s \le t \le T$
\begin{equation}\label{ineq:UBRcontrolestimates}
\begin{aligned}
| A^1_{st}|_{\mathcal{L}(H^{-n},H^{-(n+ 1)})}& \leq C_{{A}}|t-s|^{\alpha} \ \  \text{for}\ \ n\in\{0,2\} ,\\
|A^2_{st}|_{\mathcal{L}(H^{-n},H^{-(n+2)})}& \leq C_{{A}}|t-s|^{2\alpha} \ \ \text{for}\ \ n\in\{0,1\},
\end{aligned}
\end{equation}
and   Chen's relation holds true, namely,
\begin{equation}\label{eq:chen-relation}
\delta A^1_{s\theta t}=0,\qquad\delta A^2_{s\theta t}= A^1_{\theta t}A^1_{s\theta},\qquad 0\leq s\le\theta\le t\leq T.
\end{equation}
\end{definition}

The partial differential equations of interest in this paper can be written in the abstract form
\begin{equation} \label{eq:AbstractURDEquation}
d g_t =  d\mu_t + \mathbf{A}(d t) g_t,
\end{equation}
where $\mu$ denotes the corresponding drift of appropriate spatial regularity and depending also on the solution $g$ and $\mathbf{A}$ is an unbounded $\alpha$-rough driver (cf. \eqref{expansion equation} below).
We say that a  path
$g: [0,T] \rightarrow H$ is a solution to \eqref{eq:AbstractURDEquation} provided the 2-index map
$$
g_{st}^{\natural} := \delta g_{st} - \delta \mu_{st} - A_{st}^1 g_s - A_{st}^2 g_s,\qquad 0\leq s\leq t\leq T,
$$
belongs to $C_{2,\rm{loc}}^{3\alpha}([0,T];H^{-3})$.

We conclude this section with the main a priori estimate which can be proved following the lines of \cite[Corollary 2.11]{DeGuHoTi16}, cf. \cite{HLN,HLN19}. The bound which holds global in time is a consequence of the local estimate combined with \cite[Exercise 4.24]{FrHa14}.

\begin{theorem} \label{Thm2.5Abstract}
Assume
\begin{itemize}

\item
$\mathbf{A}  = (A^1,A^2)$ is an unbounded $\alpha$-rough driver for some $\alpha \in (\frac13,\frac12]$;

\item
$\mu: [0,T] \rightarrow H^{-2}$ is Lipschitz continuous and $\mu: [0,T] \rightarrow H^{-1}$ is $\frac12$-H\"older continuous, i.e.,
$$\| \delta \mu_{st} \|_{H^{-2}} \leq C_{\mu,1}|t-s|\qquad \text{and}\qquad \| \delta \mu_{st} \|_{H^{-1}} \leq C_{\mu,2}|t-s|^{\frac{1}{2}}$$ for some constants $C_{\mu,1}$ and $C_{\mu,2}$;
\item
a bounded path $g: [0,T] \rightarrow H$ is a solution to \eqref{eq:AbstractURDEquation}.
\end{itemize}
Then there exists a constant $L>0$ such that whenever $C_{A}|t-s|^{\alpha} \leq L$  we have
$$
\|g_{st}^{\natural}\|_{H^{-3}} \lesssim \big(|g|_{L^{\infty}_T H}(1+C_{A}^{2})  + C_{\mu,1}C_{A}\big)|t-s|^{3\alpha},
$$
$$
\|\delta g_{s t}\|_{H^{-1}}\lesssim \big(|g|_{L^{\infty}_T H}(1+C_{A}^{2})+C_{\mu,2}+C_{\mu,1}C_{A}\big)|t-s|^{\alpha},
$$
where the implicit constants as well as $L$ are universal and in particular independent of $g$ and $\mathbf{A}$.

Finally, we have the following bound which holds globally in time
$$
\| \delta g_{st} \|_{H^{-1}} \lesssim \left(|g|_{L^{\infty}_T H}(1+C_{A}^{2})+C_{\mu,2}+C_{\mu,1}C_{A}\right) \left( 1 + C_A^{\frac{1- \alpha}{\alpha}} \right)|t-s|^{\alpha}.
$$
\end{theorem}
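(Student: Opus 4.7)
The plan is to follow the now-standard programme for unbounded rough drivers originating in \cite{DeGuHoTi16}. The fundamental idea is that although the remainder
$$g^\natural_{st} := \delta g_{st} - \delta\mu_{st} - A^1_{st}g_s - A^2_{st}g_s$$
is only assumed to lie in $C^{3\alpha}_{2,\mathrm{loc}}([0,T];H^{-3})$, the algebraic structure forces it to satisfy a coboundary-type relation that can be sewn. Using $\delta(\delta g)_{s\theta t}=0$, $\delta(\delta\mu)_{s\theta t}=0$ and the Chen relations \eqref{eq:chen-relation}, a direct computation yields
$$\delta g^\natural_{s\theta t} = A^1_{\theta t}\,g^\natural_{s\theta} + A^2_{\theta t}\,\delta g_{s\theta} + A^1_{\theta t}\,\delta\mu_{s\theta} + A^1_{\theta t}A^2_{s\theta}\,g_s, \qquad s\le\theta\le t,$$
where $\delta g_{s\theta}$ may then be re-expanded via the definition of $g^\natural$ itself. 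This is the cocycle identity to which the sewing lemma will be applied.

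Next, I would estimate each term on the right-hand side in $H^{-3}$ using the norm bounds \eqref{ineq:UBRcontrolestimates} together with the hypotheses on $\mu$ and the bound $|g|_{L^\infty_T H}$. The operator $A^1_{\theta t}$ costs one unit of regularity and a factor $C_A|t-s|^\alpha$, so $A^1_{\theta t}g^\natural_{s\theta}$ contributes something of order $C_A|t-s|^\alpha\,\|g^\natural\|_{C^{3\alpha}_2(H^{-3})}\,|t-s|^{3\alpha}$; the remaining terms are bounded directly using $|A^2_{\theta t}|_{\mathcal{L}(H^{-1},H^{-3})}\lesssim C_A|t-s|^{2\alpha}$, $\|\delta\mu_{s\theta}\|_{H^{-2}}\le C_{\mu,1}|t-s|$ and $|A^1_{\theta t}A^2_{s\theta} g_s|_{H^{-3}}\lesssim C_A^2|g|_{L^\infty_T H}|t-s|^{3\alpha}$. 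The choice of target space $H^{-3}$ is exactly what makes each of these bounds have exponent $3\alpha>1$ and hence fall into the regime of the sewing lemma.

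The heart of the argument is then the sewing/reconstruction step: since $\delta g^\natural$ has regularity exponent strictly greater than $1$, one applies (a variant of) the sewing lemma to deduce, on any subinterval $[s,t]$ with $C_A|t-s|^\alpha\le L$ for a universal $L$,
$$\|g^\natural_{st}\|_{H^{-3}} \lesssim \bigl(|g|_{L^\infty_T H}(1+C_A^2) + C_{\mu,1}C_A\bigr)|t-s|^{3\alpha}.$$
A bootstrap is needed here: the estimate on $\delta g$ in $H^{-1}$ feeds back into the estimate on $g^\natural$, and smallness $C_A|t-s|^\alpha\le L$ is precisely what lets the resulting self-referential inequality be closed by absorbing $\|g^\natural\|$ from the right-hand side into the left. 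Once $g^\natural$ is controlled, substituting back into the definition gives the claimed $H^{-1}$-bound on $\delta g_{st}$.

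Finally, to remove the smallness restriction and obtain the last, global-in-time estimate, I would partition $[0,T]$ into $N\lesssim 1+(C_A)^{1/\alpha}T$ subintervals of length $\sim (L/C_A)^{1/\alpha}$ on each of which the local bound applies, and concatenate; the standard argument of \cite[Exercise 4.24]{FrHa14} converts this telescoping into the factor $1+C_A^{(1-\alpha)/\alpha}$ appearing in the statement. The main obstacle, and the only delicate point, is the sewing/bootstrap step: one has to be careful that the fixed-point style argument for $g^\natural$ is performed only at the level of the remainder in the \emph{weakest} space $H^{-3}$ (so that $A^2$ and products $A^1A^2$ can act), while the a priori input $|g|_{L^\infty_T H}$ remains at the \emph{strongest} level; this is exactly the scale of Hilbert spaces $H^{-n}$ for $n\in\{0,1,2,3\}$ that the hypotheses are tailored to.
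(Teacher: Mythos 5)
Your skeleton is the right one and largely mirrors how the cited result \cite[Corollary~2.11]{DeGuHoTi16} is proved: the cocycle identity
\begin{equation*}
\delta g^\natural_{s\theta t} = A^1_{\theta t}g^\natural_{s\theta} + A^1_{\theta t}\delta\mu_{s\theta} + A^1_{\theta t}A^2_{s\theta}g_s + A^2_{\theta t}\delta g_{s\theta}
\end{equation*}
is computed correctly from Chen's relation, the estimates of the last three terms in $H^{-3}$ are exactly as you state, and the concatenation giving the factor $1+C_A^{(1-\alpha)/\alpha}$ is the standard one from \cite[Exercise~4.24]{FrHa14}. However, there is a genuine gap in the step you identify as ``the only delicate point'': the term $A^1_{\theta t}\,g^\natural_{s\theta}$ cannot be estimated in $H^{-3}$ at all under the hypotheses of Definition~\ref{def:urd}. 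The mapping bounds \eqref{ineq:UBRcontrolestimates} only give $A^1_{\theta t}:H^{-2}\to H^{-3}$ (for $n\in\{0,2\}$); they say nothing about $A^1$ acting on $H^{-3}$, and even if they did, the output would land in $H^{-4}$, outside the target space of the sewing. Your heuristic ``$A^1_{\theta t}g^\natural_{s\theta}$ contributes $C_A|t-s|^{\alpha}\|g^\natural\|_{C^{3\alpha}_2H^{-3}}|t-s|^{3\alpha}$'' silently assumes a bound $A^1:H^{-3}\to H^{-3}$ that is simply not available. This is precisely why the classical sewing lemma cannot be applied directly to $g^\natural$ and why the argument of \cite{BaGu15,DeGuHoTi16} introduces a family of smoothing operators $J^\eta$ on the $H^{-n}$ scale, writes $g^\natural=J^\eta g^\natural+(\mathrm{Id}-J^\eta)g^\natural$, estimates $A^1_{\theta t}J^\eta g^\natural_{s\theta}$ in $H^{-3}$ at the cost of a factor $\eta^{-1}$, estimates $(\mathrm{Id}-J^\eta)g^\natural$ using the available regularity at the cost of a gain in $\eta$, and finally optimizes $\eta\sim|t-s|^\alpha$ to close the estimate. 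Without introducing this smoothing/interpolation device (or an equivalent rough Gronwall argument), the central sewing step does not close, so the proposal as written does not constitute a proof.

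Your remark that the bootstrap must be performed ``in the weakest space $H^{-3}$'' while the a priori input sits at $H^0$ is a correct observation about the role of the scale, but it does not resolve the issue: the obstruction is not at the level of $A^2$ or $A^1A^2$, which you handle correctly, but at the level of $A^1$ hitting the remainder itself, which escapes the scale $\{H^0,\dots,H^{-3}\}$. Once the smoothing step is inserted, the remainder of your plan (absorption under the smallness $C_A|t-s|^\alpha\le L$, back-substitution into the definition of $g^\natural$ to bound $\delta g$ in $H^{-1}$, and concatenation over a partition of mesh $\sim (L/C_A)^{1/\alpha}$) is correct and matches the reference proof the paper points to.
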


\section{Formulation of the main results}
\label{ss:formul}

Let $\ell^2= \ell^2(\mathbb Z^3_0)$ be the space of square summable sequences indexed by $\mathbb Z^3_0$; it is equipped with the norm $\|\cdot \|_{\ell^2}$. For any $N\in \mathbb Z_+$, take $\theta^N\in \ell^2$ such that
  $$\theta^N_k= \frac{{\mathbf 1}_{\{ N\leq |k|\leq 2N\}}}{|k|^\gamma}, \quad k\in \mathbb Z^3_0,$$
where $\gamma>0$ is some fixed constant. On a given time interval $[0, T]$, we consider the stochastic 3D Navier--Stokes equations with a transport type noise:
\begin{equation}  \label{SPDE no cut-off}
  d \xi^N +\mathcal{L}_{u^N} \xi^N \hspace{0.17em} d t =
  \Delta \xi^N \hspace{0.17em} d t + \frac{C_{\nu}}{\| \theta^N
  \|_{\ell^2}}  \sum_{k \in \mathbb{Z}_0^3} \sum_{\alpha = 1}^2 \theta_k^N \Pi
  (\sigma_{k, \alpha} \cdot \nabla \xi^N) \circ d W_t^{k, \alpha},
\end{equation}
with $\xi^N(0)= \xi_0\in H$. Here $C_{\nu} = \sqrt{3 \nu / 2}$  with some given $\nu > 0$, which will be chosen depending on the size of the initial conditions.

As already discussed in the introduction, due to the presence of the nonlinear term, these equations have only local solutions in $H$. Thus we make use of a cut-off technique. For $R>0$, let $f_R\in C^1_b([0,\infty), [0,1])$ be a non-increasing function such that it is identically 1 on $[0,R]$ and vanishes on $[R+1,\infty)$. Consider the equations with cut-off:
\begin{equation} \label{SPDE with cut-off}
  d \xi_R^N + f_R (\| \xi_R^N \|_{- \delta}) \mathcal{L}_{u_R^N}
  \xi_R^N \hspace{0.17em} d t = \Delta \xi_R^N \hspace{0.17em}
  d t + \frac{C_{\nu}}{\| \theta^N \|_{\ell^2}}  \sum_{k \in
  \mathbb{Z}_0^3} \sum_{\alpha = 1}^2 \theta_k^N \Pi (\sigma_{k, \alpha} \cdot
  \nabla \xi_R^N) \circ d W_t^{k, \alpha},
\end{equation}
where $\|\cdot \|_{-\delta} = \|\cdot \|_{H^{-\delta}}$. Recall that $B_H(K)$ is the closed ball in $H$ centered at the origin with radius $K>0$. It was shown in Theorem 1.3 of \cite{FL19} that, for every $N\geq 1$ and $\xi_R^N(0)= \xi_0\in B_H(K)$, the above equation has a pathwise unique solution satisfying
  \begin{equation} \label{apriori-estimate}
  \PP\mbox{-a.s.}, \quad \sup_{t \in [0, T]} \| \xi_R^N (t) \|_H^2 + \int_0^T \| \nabla \xi_R^N
     (t) \|_H^2 \,d t \leq C (R, K),
  \end{equation}
where $C (R, K)>0$ is some deterministic constant.

Moreover, given $K>0$, we deduce from \cite[Theorem~1.4]{FL19} that for all $R>0$ and $\nu>0$ big enough, for any $\varepsilon>0$, it holds
  \begin{equation} \label{convergence-in-probab}
  \lim_{N\to \infty} \sup_{\xi_0\in B_H(K)} \mathbb P \bigg(\sup_{t\in [0,T]} \big\|\xi_R^N (t,\xi_0)- \xi(t,\xi_0) \big\|_{-\delta} >\varepsilon \bigg) =0,
  \end{equation}
where $\xi(t,\xi_0)$ is the unique solution to the following deterministic 3D Navier--Stokes equation
\begin{equation} \label{det NS}
  \partial_t \xi +\mathcal{L}_u \xi = \bigg(1 + \frac35 \nu\bigg) \Delta \xi,\quad \xi(0)=\xi_0.
\end{equation}
It is well known that, for given $K>0$, there exists $\nu>0$ such that for all $\xi_0\in B_H(K)$, the equation \eqref{det NS} admits a unique solution satisfying
  \begin{equation} \label{det-NS-growth}
  \sup_{t \in [0, T]} \| \xi (t) \|_H^2 + \int_0^T \| \nabla \xi (t) \|_H^2 \,
     d t \leq C (K)^2 .
  \end{equation}
In the sequel, $K$ and a corresponding $\nu$ will be considered as fixed.

Choose $R_K= C (K)+2$; we deduce from the assertions \eqref{convergence-in-probab} and \eqref{det-NS-growth} that, given $\varepsilon>0$, there is $N_0= N_0(K,\varepsilon)\in \mathbb Z_+$ such that for all $N\geq N_0$, for all $\xi_0\in B_H(K)$,
  \begin{equation}\label{bound-vorticity}
  \mathbb P \bigg(\sup_{t\in [0,T]} \big\|\xi_{R_K}^N (t,\xi_0) \big\|_{-\delta} \leq R_K- 1 \bigg) \geq 1-\varepsilon,
  \end{equation}
where we have used the fact that $\|y\|_{-\delta} \leq \|y\|_H$. This implies that, for every $N\geq N_0$, $\xi_{R_K}^N$ solves the equation \eqref{SPDE no cut-off} without cut-off with a probability greater than $1-\varepsilon$.

Next, for $k\in \mathbb Z^3_0$ and $\alpha=1,2$, let $\big\{ W_t^{k, \alpha, n} \big\}_{n\geq 1}$ be a piecewise linear approximation of the Brownian motion $W_t^{k, \alpha}$.

\begin{remark}\label{rem:approx}
Alternatively, we may replace piecewise linear approximations by mollifications, in which case the obtained vector fields $v$ in Theorem~\ref{thm:main} and Theorem~\ref{thm:main1} are smooth.
\end{remark}

 We consider the 3D Navier--Stokes equations with smooth random force
\begin{equation}\label{random PDE without cut-off}
  \partial_t \xi^{N, n} +\mathcal{L}_{u^{N, n}} \xi^{N, n} = \Delta \xi^{N, n}
  + \frac{C_{\nu}}{\| \theta^N \|_{\ell^2}}  \sum_{k \in \mathbb{Z}_0^3}
  \sum_{\alpha = 1}^2 \theta_k^N \Pi (\sigma_{k, \alpha} \cdot \nabla \xi^{N,
  n}) \partial_t W_t^{k, \alpha, n} ,
\end{equation}
as well as the equations with cut-off
\begin{equation}\label{random PDE with cut-off}
  \partial_t \xi_R^{N, n} + f_R (\| \xi_R^{N, n} \|_{- \delta})
  \mathcal{L}_{u_R^{N, n}} \xi_R^{N, n} = \Delta \xi_R^{N, n} +
  \frac{C_{\nu}}{\| \theta^N \|_{\ell^2}}  \sum_{k \in \mathbb{Z}_0^3}
  \sum_{\alpha = 1}^2 \theta_k^N \Pi (\sigma_{k, \alpha} \cdot \nabla
  \xi_R^{N, n}) \partial_t W_t^{k, \alpha, n}.
\end{equation}
Similarly to \eqref{SPDE with cut-off}, for any $\xi_R^{N, n}(0)= \xi_0\in B_H(K)$, the latter equation admits a unique solution verifying
  \begin{equation}\label{random PDE with cut-off energy bound}
  \PP\mbox{-a.s.}, \quad  \sup_{t \in [0, T]} \| \xi_R^{N, n} (t) \|_H^2 + \int_0^T \| \nabla \xi_R^{N, n} (t) \|_H^2\, d t
    \leq C (R, K).
  \end{equation}

Note that the estimate \eqref{random PDE with cut-off energy bound} depends only on $R$ and the bound $K$ of the initial condition $\xi_0\in H$ and is independent of $N,n$. The basis for our Wong--Zakai result is obtained by techniques from rough path theory developed in \cite{HLN, HLN19}, which allow us to derive additional  estimates uniform in $n$ (see Section~\ref{ss:rp} and Proposition \ref{prop:apriori}). However, rough path theory alone  is not sufficient to conclude. In particular, the obtained bounds only permit to deduce   \emph{relative} compactness of realizations of the approximate sequence of solutions $\{\xi^{N_{0},n}_{R_{K}}(\omega)\}_{n\geq1}$ and the  convergence follows only for a  subsequence which depends on $\omega$.

In order to obtain convergence of the full sequence, one would need to establish uniqueness of the rough path formulation of the limiting (as $n\to\infty$) equation \eqref{SPDE with cut-off}. This is a very challenging problem which remains open. The main difficulty lies in the presence of the Leray projection which is not compatible with the tensorization technique developed in \cite{DeGuHoTi16} to prove uniqueness of variational  rough PDEs.

To overcome this issue, we reach back to probability theory and proceed by a stochastic compactness argument relying on Skorokhod representation theorem. The point is that uniqueness for the stochastic formulation of \eqref{SPDE with cut-off} follows by classical arguments. Nevertheless, it is necessary to preserve the rough path formulation of the equations in the core of the proof, as this is the setting where we are able to rigorously obtain the convergence of \eqref{random PDE with cut-off} to \eqref{SPDE with cut-off}. The final step entails a new identification of the limit procedure which combines martingale and rough path arguments.

More precisely, first we prove  the following result (see \emph{Step 1} of the proof of Proposition~\ref{prop-intermediate}).

\begin{lemma} \label{lem-1}
Assume that the sequence $\{\xi^n_0 \}_{n\geq 1} \subset B_H(K)$. Let $\xi_{R_K}^{N_0, n}$ be the unique solution to \eqref{random PDE with cut-off} with $N=N_0$, $R=R_K$ and $\xi_{R_K}^{N_0, n}(0)= \xi^n_0$. Then the family of laws of $\xi_{R_K}^{N_0, n}$ is tight in $C ([0, T] ; H^{- \delta}) \cap L^2(0,T; H)$.
\end{lemma}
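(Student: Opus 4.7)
The plan is to cast \eqref{random PDE with cut-off} in the unbounded rough-driver framework of Section~\ref{ss:rp} and deduce uniform-in-$n$ time regularity from Theorem~\ref{Thm2.5Abstract}, then conclude by classical compactness. Write $\xi^n\assign\xi^{N_0,n}_{R_K}$ and $u^n\assign u^{N_0,n}_{R_K}$ for brevity. I would rewrite \eqref{random PDE with cut-off} in the abstract form \eqref{eq:AbstractURDEquation} with drift
\[
\mu^n_t\assign\int_0^t\bigl[\Delta\xi^n(s)-f_{R_K}(\|\xi^n(s)\|_{-\delta})\,\mathcal{L}_{u^n(s)}\xi^n(s)\bigr]\,ds
\]
and unbounded rough driver $\bA^n=(A^{1,n},A^{2,n})$ built as in \cite{HLN,HLN19} from the canonical lift of the piecewise linear path $t\mapsto(W^{k,\alpha,n}_t)_{k,\alpha}$ acting on $g$ via the transport operators $g\mapsto\tfrac{C_\nu}{\|\theta^{N_0}\|_{\ell^2}}\theta^{N_0}_k\Pi(\sigma_{k,\alpha}\cdot\nabla g)$; crucially, $\theta^{N_0}$ is supported on finitely many modes, so the enhanced noise only involves finitely many Brownian directions.

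The a priori bound \eqref{random PDE with cut-off energy bound} provides an almost sure, $n$-uniform bound on $\|\xi^n\|_{L^\infty_T H}+\|\xi^n\|_{L^2_T H^1}$ by the deterministic constant $C(R_K,K)$. Combined with the trilinear estimate \eqref{trilinear form estimate} and the cut-off factor $f_{R_K}$, this shows that $\mu^n$ is Lipschitz into $H^{-2}$ and $\tfrac12$-H\"older into $H^{-1}$ with constants $C^n_{\mu,1},C^n_{\mu,2}$ dominated by a deterministic quantity depending only on $K$ and $R_K$. Standard rough-path results for the canonical lift of piecewise linear approximations of Brownian motion (see e.g.\ \cite{FrHa14}) ensure that the rough-path norm $C_{\bA^n}$ is tight in $n$: for every $\eta>0$ there is $M_\eta$ with $\sup_n\mathbb{P}(C_{\bA^n}>M_\eta)<\eta$. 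Plugging these two inputs into the global estimate of Theorem~\ref{Thm2.5Abstract} then yields, for every $\alpha\in(\tfrac13,\tfrac12)$,
\[
\sup_n\mathbb{P}\bigl([\delta\xi^n]_{C^\alpha([0,T];H^{-1})}>M\bigr)\longrightarrow 0\qquad\text{as }M\to\infty.
\]

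The remainder is routine. On $\bT^3$ the embedding $H\hookrightarrow H^{-\delta}$ is compact for $\delta>0$, so interpolating the a.s.\ $L^\infty_T H$ bound against the tight $C^\alpha_T H^{-1}$ bound yields, with high probability, an equicontinuity estimate in some $C^\beta([0,T];H^{-\delta})$ with $\beta>0$; Ascoli--Arzel\`a then delivers tightness of the laws in $C([0,T];H^{-\delta})$. Tightness in $L^2(0,T;H)$ follows from the Aubin--Lions--Simon compactness lemma applied to the triple $H^1\Subset H\hookrightarrow H^{-\delta}$, using the uniform $L^2_T H^1$ bound together with the tight $C^\beta_T H^{-\delta}$ bound just obtained, and joint tightness in the intersection is immediate. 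The point I expect to be the main obstacle is not the compactness step but the verification of uniform-in-$n$ tightness of $C_{\bA^n}$ in the $\alpha$-H\"older rough-path topology together with the compatibility of the corresponding operator lift with Definition~\ref{def:urd}; once this is in hand, Theorem~\ref{Thm2.5Abstract} converts it automatically into the time regularity of $\xi^n$, and this is precisely the mechanism by which the rough-path approach replaces the BDG-type estimates one would normally invoke for a genuine SPDE.
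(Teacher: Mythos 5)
Your proposal is correct and follows essentially the same route as the paper's Step~1 of the proof of Proposition~\ref{prop-intermediate}: a priori bounds from \eqref{random PDE with cut-off energy bound} and the uniform $C^\alpha_T H^{-1}$ H\"older bound supplied by Proposition~\ref{prop:apriori} (i.e.\ Theorem~\ref{Thm2.5Abstract} together with the moment bounds \eqref{eq:13} on $C_{A^n}$ from Exercise~10.14 of \cite{FrHa14}), followed by the Aubin--Lions/Arzel\`a--Ascoli compact embedding of $L^\infty_T H \cap L^2_T H^1 \cap C^\alpha_T H^{-1}$ into $\mathcal{X}_\xi$. The only cosmetic difference is that you phrase the $C([0,T];H^{-\delta})$ compactness via interpolation plus Arzel\`a--Ascoli while the paper routes it through weakly continuous functions $C_{\rm weak}([0,T];H)$ via the cited textbook theorems, but these are the same argument.
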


Consequently, our main Wong--Zakai approximation result  proved in Section \ref{sec-Wong-Zakai} reads as follows.

\begin{theorem}  \label{wong zakai in law}
Let $\xi_{R_K}^{N_0, n}(t,\xi_0)$ (resp. $\xi_{R_K}^{N_0}(t,\xi_0)$) be the unique solution to \eqref{random PDE with cut-off} (resp. \eqref{SPDE with cut-off}) with the initial value $\xi_0\in H$. Then for any $\varepsilon>0$,
  $$\lim_{n\to \infty} \sup_{\xi_0\in B_H(K)} \mathbb{P} \bigg(\sup_{t\in[0,T]}\big\|\xi_{R_K}^{N_0, n}(t,\xi_0) - \xi_{R_K}^{N_0} (t,\xi_0)\big\|_{-\delta} >\varepsilon \bigg)=0. $$
\end{theorem}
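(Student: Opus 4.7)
The overall strategy is a stochastic compactness argument \`a la Skorokhod combined with pathwise uniqueness of the Stratonovich limit \eqref{SPDE with cut-off}, with the uniformity in the initial datum $\xi_0\in B_H(K)$ handled by a separate compactness reduction. I would argue by contradiction: if the conclusion fails, there exist $\varepsilon_0>0$, a subsequence (not relabeled) and initial data $\xi_0^n\in B_H(K)$ with
$$
\mathbb P\Bigl(\sup_{t\in[0,T]}\bigl\|\xi_{R_K}^{N_0,n}(t,\xi_0^n)-\xi_{R_K}^{N_0}(t,\xi_0^n)\bigr\|_{-\delta}>\varepsilon_0\Bigr)\geq\varepsilon_0.
$$
Since the closed ball $B_H(K)$ is weakly compact in $H$ and the embedding $H\hookrightarrow H^{-\delta}$ is compact, I can extract a further subsequence along which $\xi_0^n\to\xi_0^\infty$ in $H^{-\delta}$ with $\xi_0^\infty\in B_H(K)$.

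Next, I would invoke Lemma~\ref{lem-1} to get tightness of the laws of $\{\xi_{R_K}^{N_0,n}(\cdot,\xi_0^n)\}_{n}$ in $C([0,T];H^{-\delta})\cap L^2(0,T;H)$. Tightness of the driving piecewise linear paths together with their canonical rough path lifts in the space $C_2^\alpha\times C_2^{2\alpha}$ for $\alpha\in(\tfrac13,\tfrac12)$ is classical, since the Stratonovich lift of Brownian motion is exactly the limit of such canonical lifts. By Skorokhod's representation theorem (along a further subsequence) I obtain a new probability space carrying almost surely convergent copies of all these objects in their respective topologies. The core of the argument, and the main obstacle, is the identification of the limit: I would rewrite \eqref{random PDE with cut-off} in the unbounded rough driver form of Section~\ref{ss:rp}, use the a priori estimates of Theorem~\ref{Thm2.5Abstract} (uniform in $n$ thanks to the uniform bound \eqref{random PDE with cut-off energy bound} and rough path stability) to pass to the limit in the nonlinear transport term via the rough integral, and separately run a martingale argument on the driving processes to verify that the quadratic covariations of the limiting driver match \eqref{qudratic-var}. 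This identifies the driver on the new space as a complex Brownian system in the sense of Section~\ref{ss:BM} and shows that the limit point solves the Stratonovich equation \eqref{SPDE with cut-off} with initial condition $\xi_0^\infty$.

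Finally, pathwise uniqueness for the stochastic formulation of \eqref{SPDE with cut-off} — which holds by classical $H^{-\delta}$ energy estimates exploiting the Leray projection, the viscous dissipation and the cut-off — implies that the limit coincides with $\xi_{R_K}^{N_0}(\cdot,\xi_0^\infty)$. A Gy\"ongy--Krylov argument applied to the joint tightness of pairs $\bigl(\xi_{R_K}^{N_0,n}(\cdot,\xi_0^n),\xi_{R_K}^{N_0,m}(\cdot,\xi_0^m)\bigr)$ upgrades convergence in law to convergence in probability on the original probability space. Continuity of the solution map $\xi_0\mapsto \xi_{R_K}^{N_0}(\cdot,\xi_0)$ at $\xi_0^\infty$, obtained by the same stochastic compactness/uniqueness scheme applied to perturbations of the initial data, then allows me to replace $\xi_{R_K}^{N_0}(\cdot,\xi_0^n)$ by $\xi_{R_K}^{N_0}(\cdot,\xi_0^\infty)$ in the contradiction hypothesis. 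Combining both convergences via the triangle inequality contradicts the standing assumption and closes the proof. The principal technical hurdle throughout is the limit identification step, which is inaccessible by rough path techniques alone (uniqueness for the limiting rough PDE is open, as stressed in the introduction) and also by purely probabilistic methods (the Leray projection obstructs tensorisation), and must be carried out by the rough path/martingale hybrid outlined above.
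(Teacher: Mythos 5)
Your proposal is correct and follows essentially the same route as the paper: a proof by contradiction that extracts a (weakly) convergent subsequence of initial data, applies the stochastic-compactness/Skorokhod/rough-path-identification/Gy\"ongy--Krylov scheme to the approximating sequence, invokes pathwise uniqueness of \eqref{SPDE with cut-off}, and closes with the continuity of the solution map in the initial datum plus a triangle inequality. The paper merely packages the Skorokhod/identification block as the separate Proposition~\ref{prop-intermediate} and refers to \cite{FL19} for the continuity in the initial condition, but the substance coincides with what you outline.
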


Combining this assertion with \eqref{bound-vorticity}, we obtain the following.

\begin{corollary}
  Given $\varepsilon > 0$ there is $n_0 = n_0 (K, \varepsilon)$ such that for every $n \geq n_0$, for all $\xi_0\in B_H(K)$,
  \[ \mathbb{P} \bigg( \sup_{t\in[0,T]}\big\| \xi_{R_K}^{N_0, n} (t,\xi_0) \big\|_{-\delta}
     \leq R_K \bigg) \geq 1 - 2 \varepsilon . \]
  In particular, $\xi_{R_K}^{N_0, n_0}$ is a global (namely on $[0, T]$) solution of
  equation \eqref{random PDE without cut-off} with large probability.
\end{corollary}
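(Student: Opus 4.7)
The proof is a straightforward combination of the two already-established inputs: the probabilistic bound \eqref{bound-vorticity} for the Stratonovich solution $\xi_{R_K}^{N_0}$ and the Wong--Zakai approximation in probability (Theorem \ref{wong zakai in law}) that compares $\xi_{R_K}^{N_0,n}$ to $\xi_{R_K}^{N_0}$. The plan is to choose a ``slack'' of size $1$ in the $H^{-\delta}$-norm: we reserve one unit for the approximation error and one unit for the distance between $\|\xi_{R_K}^{N_0}\|_{-\delta}$ and the cutoff threshold $R_K$.

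More concretely, first I invoke \eqref{bound-vorticity}, which provides $N_0=N_0(K,\varepsilon)$ such that for every $\xi_0\in B_H(K)$,
\[
\mathbb{P}\Big(\Omega_1(\xi_0)\Big)\geq 1-\varepsilon,\qquad \Omega_1(\xi_0):=\Big\{\sup_{t\in[0,T]}\|\xi_{R_K}^{N_0}(t,\xi_0)\|_{-\delta}\leq R_K-1\Big\}.
\]
Next, I apply Theorem \ref{wong zakai in law} with the fixed error level $1$: since the convergence is uniform in $\xi_0\in B_H(K)$, there exists $n_0=n_0(K,\varepsilon)$ such that for all $n\geq n_0$ and all $\xi_0\in B_H(K)$,
\[
\mathbb{P}\Big(\Omega_2^{n}(\xi_0)\Big)\geq 1-\varepsilon,\qquad \Omega_2^{n}(\xi_0):=\Big\{\sup_{t\in[0,T]}\|\xi_{R_K}^{N_0,n}(t,\xi_0)-\xi_{R_K}^{N_0}(t,\xi_0)\|_{-\delta}\leq 1\Big\}.
\]

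On the intersection $\Omega_1(\xi_0)\cap \Omega_2^{n}(\xi_0)$, the triangle inequality in $H^{-\delta}$ gives
\[
\sup_{t\in[0,T]}\|\xi_{R_K}^{N_0,n}(t,\xi_0)\|_{-\delta}\leq (R_K-1)+1=R_K,
\]
and a union bound yields $\mathbb{P}(\Omega_1\cap \Omega_2^{n})\geq 1-2\varepsilon$, proving the first claim. For the final assertion, observe that on the same event we have $\|\xi_{R_K}^{N_0,n_0}(t,\xi_0)\|_{-\delta}\leq R_K$ for every $t\in[0,T]$, so by definition $f_{R_K}(\|\xi_{R_K}^{N_0,n_0}(t,\xi_0)\|_{-\delta})\equiv 1$ on $[0,T]$, hence the cutoff in \eqref{random PDE with cut-off} is inactive and $\xi_{R_K}^{N_0,n_0}$ solves \eqref{random PDE without cut-off} on $[0,T]$.

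There is no real obstacle here; the only point to be careful about is that both probabilistic inputs must be uniform in $\xi_0\in B_H(K)$ (which is indeed the case, since \eqref{bound-vorticity} already contains the supremum over $\xi_0$ and Theorem \ref{wong zakai in law} is stated with $\sup_{\xi_0\in B_H(K)}$), so that a single $n_0$ works simultaneously for all initial data.
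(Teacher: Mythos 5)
Your proof is correct and follows exactly the route the paper implies (the paper only says "Combining this assertion with \eqref{bound-vorticity}, we obtain the following" without spelling out details). The split of the slack into $R_K = (R_K-1)+1$, the union bound, and the observation that $f_{R_K}\equiv 1$ when $\|\xi_{R_K}^{N_0,n_0}\|_{-\delta}\leq R_K$ are precisely the intended steps.
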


Remark that $\xi_{R_K}^{N_0, n_0}$ satisfies also the bound \eqref{random PDE with cut-off energy bound}. Since the involved rough paths are smooth, one can follow the classical arguments to prove the uniqueness of solutions with such bounds, see for instance \cite[Theorem 1.3]{FL19}.

\begin{lemma}
  Consider equation \eqref{random PDE without cut-off} for some fixed values of the
  parameters $N,n$. Assume it has a weak solution $\xi^{N,n}$ on
  some interval $[0, T]$ so that, $ \mathbb{P}$-a.s.,
    $$\sup_{t \in [0, T]} \big\| \xi^{N, n} (t) \big\|_H^2 + \int_0^T \big\|\nabla \xi^{N, n} (t)\big\|_H^2 \,d t \leq C< \infty.$$
  Then this solution is pathwise unique.
\end{lemma}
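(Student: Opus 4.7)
The strategy is the classical uniqueness proof for strong solutions of the 3D Navier--Stokes system, exactly as the paper hints via its reference to \cite[Theorem 1.3]{FL19}. The point is that for every fixed $\omega$ the driving path $t\mapsto W^{k,\alpha,n}_t$ is piecewise linear, so its time derivative is a locally bounded (piecewise constant) function of $t$. Therefore \eqref{random PDE without cut-off} is, pathwise, a deterministic parabolic PDE perturbed by a bounded, time-dependent, linear transport forcing, and the usual energy/Gronwall machinery applies.

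Concretely, I would take two solutions $\xi_1,\xi_2$ of \eqref{random PDE without cut-off} sharing the initial datum and both satisfying the stated bound, set $\eta := \xi_1-\xi_2$, and let $w := u_1-u_2$ be the associated velocity (recovered from $\eta$ by Biot--Savart, so that $\|w\|_{H^{m+1}}\lesssim\|\eta\|_{H^{m}}$). Subtracting the two equations produces
\begin{equation*}
\partial_t \eta + \mathcal{L}_{u_1}\eta + \mathcal{L}_{w}\xi_2 = \Delta \eta + \frac{C_\nu}{\|\theta^N\|_{\ell^2}}\sum_{k,\alpha}\theta_k^N\,\Pi(\sigma_{k,\alpha}\cdot\nabla\eta)\,\partial_t W^{k,\alpha,n}_t.
\end{equation*}
Pairing with $\eta$ in $H$, the transport noise drops out entirely: since $\Pi$ is the orthogonal projection on $L^2$ and $\eta\in H$, one has $\langle \Pi(\sigma_{k,\alpha}\cdot\nabla\eta),\eta\rangle_H = \langle \sigma_{k,\alpha}\cdot\nabla\eta,\eta\rangle_{L^2}$, and integration by parts together with $\nabla\cdot\sigma_{k,\alpha}=0$ forces this quantity to vanish. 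So the (pathwise bounded, but possibly large in sup-norm) noise plays no role at all in the energy identity for $\eta$.

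What remains is a standard deterministic vorticity estimate: the Laplacian yields $\|\nabla\eta\|_H^2$, the transport piece $\langle u_1\cdot\nabla\eta,\eta\rangle$ vanishes by \eqref{eq:B prop}, and the stretching/cross terms $\langle \eta\cdot\nabla u_1,\eta\rangle$, $\langle w\cdot\nabla\xi_2,\eta\rangle$, $\langle \xi_2\cdot\nabla w,\eta\rangle$ are handled through the trilinear bound \eqref{trilinear form estimate}. Using the extra regularity of $w$ and the hypothesis $\xi_i\in L^\infty_t H\cap L^2_t H^1$, an appropriate choice of Sobolev exponents allows one factor of $\|\nabla\eta\|_H$ to be absorbed into the dissipation by Young's inequality, giving
\begin{equation*}
\tfrac{d}{dt}\|\eta\|_H^2 + \|\nabla\eta\|_H^2 \leq g(t)\,\|\eta\|_H^2, \qquad g\in L^1(0,T),
\end{equation*}
with $g$ polynomial in $\|\xi_1\|_{H^1}$ and $\|\xi_2\|_{H^1}$. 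Combined with $\eta(0)=0$, Gronwall's lemma yields $\eta\equiv 0$.

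The only place one must be careful is the 3D vorticity stretching $\xi\cdot\nabla u$, which is exactly the obstruction to unconditional global well-posedness in this formulation. However, the a priori bound $\xi^{N,n}\in L^\infty_t H\cap L^2_t H^1$ places us in the strong-solution regime where the trilinear estimate closes the Gronwall loop without difficulty, so no new ingredient is needed beyond a careful bookkeeping of Sobolev exponents.
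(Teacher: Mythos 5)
Your proof is correct and matches the approach the paper intends: the paper gives no argument for this lemma, merely observing above it that ``since the involved rough paths are smooth, one can follow the classical arguments to prove the uniqueness of solutions with such bounds, see for instance \cite[Theorem 1.3]{FL19}.'' You have spelled out exactly that classical argument — fix $\omega$, observe that the piecewise-linear driving path renders \eqref{random PDE without cut-off} a deterministic parabolic PDE, test the difference equation against $\eta$ so that the transport noise vanishes by the divergence-free and self-adjointness properties of $\Pi$ and $\sigma_{k,\alpha}$, and then close the Gr\"onwall loop using \eqref{trilinear form estimate} together with the $L^\infty_t H \cap L^2_t H^1$ bound on both solutions. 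This is the strong--strong uniqueness argument in the regularity class the lemma assumes, and it is exactly what the paper's citation points to.
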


Next, we derive the following consequence.

\begin{corollary}\label{coro-1}
  Given $K>0$ and $\varepsilon > 0$, there are $N_0$ and $n_0$ such that for all $\xi_0 \in B_H(K)$, the maximal time $\tau_{N_0, n_0}(\xi_0)$ of existence and uniqueness  for equation \eqref{random PDE without cut-off} in $H$  satisfies
  \[ \mathbb{P} (\tau_{N_0, n_0}(\xi_0) \geq T) \geq 1 - \varepsilon . \]
\end{corollary}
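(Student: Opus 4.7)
The proof is essentially an assembly of the results established immediately above: the Wong--Zakai approximation of Theorem~\ref{wong zakai in law} together with the a priori bound \eqref{bound-vorticity} (which together yield the preceding Corollary with $R_K$ in place of $R_K-1$), the uniform energy estimate \eqref{random PDE with cut-off energy bound}, and the pathwise uniqueness lemma. The key conceptual observation is that on the good event where the $H^{-\delta}$-norm of the cut-off solution stays below the threshold $R_K$, the nonlinear cut-off factor $f_{R_K}$ is identically $1$, so the cut-off equation \eqref{random PDE with cut-off} coincides pathwise with the original equation \eqref{random PDE without cut-off} on the whole interval $[0,T]$.

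In more detail, first I would apply the Corollary stated above with $\varepsilon$ replaced by $\varepsilon/2$ to obtain integers $N_0=N_0(K,\varepsilon)$ and $n_0=n_0(K,\varepsilon)$ such that, for every $\xi_0 \in B_H(K)$, the event
\[
\Omega_{\xi_0}:=\Big\{\sup_{t\in[0,T]}\big\|\xi^{N_0,n_0}_{R_K}(t,\xi_0)\big\|_{-\delta}\leq R_K\Big\}
\]
has probability at least $1-\varepsilon$. Since $f_{R_K}\equiv 1$ on $[0,R_K]$, on $\Omega_{\xi_0}$ the cut-off term in \eqref{random PDE with cut-off} is inactive throughout $[0,T]$, so the path $\xi^{N_0,n_0}_{R_K}(\cdot,\xi_0)$ solves the unmollified equation \eqref{random PDE without cut-off} on $[0,T]$ with initial datum $\xi_0$.

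Next, on $\Omega_{\xi_0}$ the energy estimate \eqref{random PDE with cut-off energy bound} furnishes the deterministic bound
\[
\sup_{t\in[0,T]}\big\|\xi^{N_0,n_0}_{R_K}(t,\xi_0)\big\|_H^2+\int_0^T\big\|\nabla\xi^{N_0,n_0}_{R_K}(t,\xi_0)\big\|_H^2\,dt\leq C(R_K,K),
\]
which is precisely the hypothesis required by the pathwise uniqueness lemma (with parameters $N_0, n_0$). Therefore $\xi^{N_0,n_0}_{R_K}(\cdot,\xi_0)$ coincides with the unique maximal solution of \eqref{random PDE without cut-off} in $H$ on $[0,T]$, and in particular $\tau_{N_0,n_0}(\xi_0)\geq T$ on $\Omega_{\xi_0}$. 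Taking probabilities yields $\mathbb{P}(\tau_{N_0,n_0}(\xi_0)\geq T)\geq 1-\varepsilon$ uniformly in $\xi_0\in B_H(K)$.

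There is no real obstacle here beyond carefully chaining the already-proved ingredients; the only point worth double-checking is that the rough-path/Wong--Zakai Corollary and the uniqueness lemma produce bounds and constants which are uniform in $\xi_0\in B_H(K)$ (for the former this uniformity is built into the supremum in Theorem~\ref{wong zakai in law}, for the latter it comes from the fact that $C(R_K,K)$ in \eqref{random PDE with cut-off energy bound} depends only on $R_K$ and $K$). Once this uniformity is in hand, the conclusion for arbitrary $\xi_0\in B_H(K)$ follows at once.
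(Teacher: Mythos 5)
Your proof is correct and takes essentially the same route as the paper: the paper itself does not spell out a proof for Corollary~\ref{coro-1}, relying on the reader to assemble the immediately preceding unnamed Corollary (global solvability of \eqref{random PDE without cut-off} on the good event $\{\sup_{t\le T}\|\xi_{R_K}^{N_0,n_0}(t)\|_{-\delta}\le R_K\}$), the energy bound \eqref{random PDE with cut-off energy bound}, and the pathwise uniqueness Lemma, exactly as you do. Your observation that $f_{R_K}\equiv 1$ below the threshold (so that the cut-off path solves the uncut-off equation on the good event) and your handling of the $\varepsilon$-bookkeeping (applying the prior Corollary with $\varepsilon/2$ so that $1-2(\varepsilon/2)=1-\varepsilon$, understanding that both $N_0$ and $n_0$ are then chosen relative to $\varepsilon/2$) are the intended steps.
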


This permits to conclude the proof of Theorem~\ref{thm:main1} since it now suffices to define
\[
v(t,x):=\frac{C_{\nu}}{\| \theta^{N_{0}} \|_{\ell^2}}  \sum_{k \in \mathbb{Z}_0^3}
  \sum_{\alpha = 1}^2 \theta_k^{N_{0}}  \sigma_{k, \alpha}(x) \partial_t W_t^{k, \alpha, n_{0}}.
\]
Finally, we turn to the proof of Theorem~\ref{thm:main}. In what follows we formulate and prove two propositions, both of which lead to Theorem~\ref{thm:main}. The difference between Proposition~\ref{prop:thm1b} and Proposition~\ref{prop:thm1a} lies in the order of quantifiers. In the simpler Proposition~\ref{prop:thm1b}, the parameters $N_{0},n_{0}$ do not depend on $\mu$ but the price to pay is that the ``good'' $\omega$ is not easy to ``catch'', i.e. the proof is not constructive. On the other hand, Proposition~\ref{prop:thm1a} shows that  every $\omega$ from a set of full probability is ``good'' provided  the parameters $N_{0},n_{0}$ are appropriately chosen.

\begin{proposition}\label{prop:thm1b}
Let $K,T>0$ be given. For any $\varepsilon>0$  there exist  $N_{0},n_{0}\in\N$  and $\omega\in \Omega$  such that for every probability measure $\mu$ on $B_{H}(K)$ it holds
$$
\mu\big(\{\xi_{0}\in B_{H}(K)\,:\, \tau_{N_{0},n_{0}}(\xi_{0},\omega)\geq T \}  \big)\geq 1-\varepsilon.
$$
\end{proposition}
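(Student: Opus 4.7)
The plan is to reduce the proposition to an averaging argument over $\mu$ based on Corollary~\ref{coro-1}. The crucial input is that Corollary~\ref{coro-1} furnishes parameters $N_{0},n_{0}$ depending only on $K,T$ and a tolerance, not on the particular initial datum or on the measure $\mu$: this is what enables us to fix them \emph{before} $\mu$ is revealed. Given $\varepsilon>0$, I would therefore begin by applying Corollary~\ref{coro-1} with $\eta:=\varepsilon^{2}/2$ in place of $\varepsilon$, obtaining $N_{0},n_{0}\in\mathbb{N}$ such that
\[
\mathbb{P}\bigl(\tau_{N_{0},n_{0}}(\xi_{0})\geq T\bigr)\geq 1-\eta\qquad\text{for every }\xi_{0}\in B_{H}(K).
\]
Note that this is the only place in which the stochastic Wong--Zakai machinery (Theorem~\ref{wong zakai in law} together with the pathwise uniqueness lemma) is invoked, and the uniformity in $\xi_{0}\in B_{H}(K)$ is essential for what follows.

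Now fix an arbitrary probability measure $\mu$ on $B_{H}(K)$ and define the random set
\[
G(\omega):=\{\xi_{0}\in B_{H}(K):\tau_{N_{0},n_{0}}(\xi_{0},\omega)\geq T\}.
\]
A preparatory step is the joint measurability of $(\xi_{0},\omega)\mapsto\mathbf{1}_{\{\tau_{N_{0},n_{0}}(\xi_{0},\omega)\geq T\}}$, so that $\omega\mapsto\mu(G(\omega))$ is a bona fide random variable. I would extract this from the pathwise uniqueness and continuous dependence on the initial datum for the cut-off equation \eqref{random PDE with cut-off} (which, with smooth driving paths, reduces to classical deterministic PDE arguments), combined with the fact that $\{\tau_{N_{0},n_{0}}(\xi_{0})\geq T\}$ is characterised by the event $\sup_{t\le T}\|\xi_{R_{K}}^{N_{0},n_{0}}(t,\xi_{0})\|_{-\delta}\le R_{K}$ already appearing in the bootstrap from equation with cut-off to equation without cut-off. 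Granted the measurability, Fubini's theorem yields
\[
\mathbb{E}\,\mu(G(\omega))=\int_{B_{H}(K)}\mathbb{P}\bigl(\tau_{N_{0},n_{0}}(\xi_{0})\geq T\bigr)\,\d\mu(\xi_{0})\geq 1-\eta.
\]

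The conclusion is then a one-line Markov inequality applied to the nonnegative random variable $1-\mu(G(\omega))$:
\[
\mathbb{P}\bigl(\mu(G(\omega))<1-\varepsilon\bigr)\leq\frac{\eta}{\varepsilon}=\frac{\varepsilon}{2}<1,
\]
so the event $\{\omega:\mu(G(\omega))\geq 1-\varepsilon\}$ is nonempty and any $\omega$ in it does the job. This is exactly the structural feature announced in the paragraph preceding the proposition: $N_{0},n_{0}$ are genuinely independent of $\mu$, while $\omega$ is produced by a non-constructive pigeonhole argument (it depends on $\mu$) and is hence ``hard to catch''. I expect the only mildly delicate step to be the joint measurability claim; the rest is a clean Fubini-plus-Markov argument riding on the uniformity already present in Corollary~\ref{coro-1}.
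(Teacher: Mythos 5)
Your proof is correct and follows essentially the same route as the paper's: fix $N_0,n_0$ via Corollary~\ref{coro-1} with uniformity over $B_H(K)$, then average over $\mu\otimes\PP$ via Fubini to extract a good $\omega$. The only cosmetic difference is the final extraction step: you shrink the tolerance to $\eta=\varepsilon^2/2$ and apply Markov's inequality, whereas the paper invokes Corollary~\ref{coro-1} directly at level $\varepsilon$ and uses the elementary fact that a $[0,1]$-valued random variable with mean $\geq 1-\varepsilon$ must exceed $1-\varepsilon$ somewhere; your version is a bit less sharp but equally valid, and your explicit attention to the joint measurability needed for Fubini is a sensible addition left implicit in the paper.
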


\begin{proof}
According to Corollary~\ref{coro-1}  there exist sufficiently large $N_{0}, n_{0}\in \N$ such that
  \begin{equation}\label{probab}
  \inf_{\xi_0 \in B_H(K)} \P (\tau_{N_{0}, n_{0}}(\xi_0) \geq T) \geq 1-\eps.
  \end{equation}
Hence
  $$\int_{B_H(K)} \PP (\tau_{N_{0}, n_{0}}(\xi_0) \geq T) \,\mathrm{d}\mu(\xi_0) \geq \inf_{\xi_0 \in B_H(K)} \PP (\tau_{N_{0}, n_{0}}(\xi_0) \geq T) \geq 1-\varepsilon$$
and Fubini's theorem implies
  $$\int_{\Omega} \bigg[\int_{B_H(K)} \textbf{1}_{\{\tau_{N_{0}, n_{0}}(\xi_0,\omega) \geq T\}} \,\mathrm{d}\mu(\xi_0) \bigg] \, \mathrm{d}\PP(\omega) \geq 1-\varepsilon.$$
Therefore, there exists $\omega\in \Omega$ such that
  $$\mu\big(\{ \xi_0 \in B_H(K): \tau_{N_{0}, n_{0}}(\xi_0,\omega) \geq T \} \big) \geq 1-\varepsilon. $$
\end{proof}

Alternatively, we may prove  Theorem~\ref{thm:main} as follows.

\begin{proposition}\label{prop:thm1a}
Let $K,T>0$ be given and let $\mu$ be a probability measure on $B_{H}(K)$. For any $\varepsilon>0$ and  a.e. $\omega\in\Omega$ there exist parameters $N_{0},n_{0}\in\N$ such that
$$
\mu\big(\{\xi_{0}\in B_{H}(K)\,:\, \tau_{N_{0},n_{0}}(\xi_{0},\omega)\geq T \}  \big)\geq 1-\varepsilon.
$$
\end{proposition}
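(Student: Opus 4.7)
The plan is to sharpen the (expectation level) bound behind Proposition~\ref{prop:thm1b} into an almost sure statement by applying the Borel--Cantelli lemma along a summable sequence of tolerances. The key point is that Corollary~\ref{coro-1} gives uniform control in $\xi_{0}$, so we may integrate against $\mu$ and obtain quantitative estimates on the sets where good parameters $N,n$ \emph{fail} to work.

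First, for each $k\in\N$ apply Corollary~\ref{coro-1} with tolerance $\varepsilon_{k}:=1/k^{2}$ to produce $N_{k},n_{k}\in\N$ such that
$$
\inf_{\xi_{0}\in B_{H}(K)}\P\bigl(\tau_{N_{k},n_{k}}(\xi_{0})\geq T\bigr)\geq 1-\varepsilon_{k}^{2}.
$$
Define the random variable
$$
A_{k}(\omega):=\mu\bigl(\{\xi_{0}\in B_{H}(K):\tau_{N_{k},n_{k}}(\xi_{0},\omega)\geq T\}\bigr).
$$
Integrating the previous inequality with respect to $\mu$ and invoking Fubini's theorem (after checking joint measurability of $(\omega,\xi_{0})\mapsto \mathbf{1}_{\{\tau_{N_{k},n_{k}}(\xi_{0},\omega)\geq T\}}$, which follows from the construction of the solution map and the continuity of $\xi_{0}\mapsto \xi^{N_{k},n_{k}}_{R_{K}}(\cdot,\xi_{0})$ established in the cut-off problem), we obtain $\mathbb{E}[1-A_{k}]\leq \varepsilon_{k}^{2}$.

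Markov's inequality then yields
$$
\P\bigl(1-A_{k}\geq \varepsilon_{k}\bigr)\leq \frac{\mathbb{E}[1-A_{k}]}{\varepsilon_{k}}\leq \varepsilon_{k}=\frac{1}{k^{2}}.
$$
Since $\sum_{k}1/k^{2}<\infty$, the Borel--Cantelli lemma implies that for $\P$-a.e.\ $\omega\in\Omega$ there exists $k_{0}(\omega)\in\N$ such that $A_{k}(\omega)\geq 1-\varepsilon_{k}$ for every $k\geq k_{0}(\omega)$. Given $\varepsilon>0$, fix a deterministic $k_{1}\in\N$ with $\varepsilon_{k_{1}}\leq \varepsilon$, and for each such $\omega$ set $k(\omega):=\max\{k_{0}(\omega),k_{1}\}$ and $(N_{0},n_{0}):=(N_{k(\omega)},n_{k(\omega)})$. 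Then
$$
\mu\bigl(\{\xi_{0}\in B_{H}(K):\tau_{N_{0},n_{0}}(\xi_{0},\omega)\geq T\}\bigr)=A_{k(\omega)}(\omega)\geq 1-\varepsilon_{k(\omega)}\geq 1-\varepsilon,
$$
which is the desired conclusion.

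The only genuinely delicate point is the measurability needed to apply Fubini; one has to use that the cut-off equation \eqref{random PDE with cut-off} is pathwise uniquely solvable with a solution depending continuously on the initial datum in the relevant topology, so $(\omega,\xi_{0})\mapsto \mathbf{1}_{\{\tau_{N,n}(\xi_{0},\omega)\geq T\}}$ is jointly measurable. The rest is a textbook Borel--Cantelli argument, and the order of quantifiers (first $\mu$, then $\omega$) is precisely what allows us to afford the $\omega$-dependent choice of parameters $(N_{0},n_{0})$.
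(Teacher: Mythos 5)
Your proof is correct and the overall structure (Corollary~\ref{coro-1} $\to$ Fubini $\to$ Borel--Cantelli) matches the paper's. The execution differs in one point worth flagging: the paper applies Borel--Cantelli directly on the product space $B_H(K)\times\Omega$ to the events $A_i=\{(\xi_0,\omega):\tau_{N_i,n_i}(\xi_0,\omega)<T\}$ with the product measure $\mu\otimes\PP$, obtaining $(\mu\otimes\PP)\big(\bigcup_N\bigcap_{i\ge N}A_i^c\big)=1$, and then uses Fubini a second time to disintegrate this into the statement for a.e.\ $\omega$. You instead integrate out $\mu$ first, turning the question into control of the real-valued random variable $A_k(\omega)=\mu(\{\xi_0:\tau_{N_k,n_k}(\xi_0,\omega)\ge T\})$, and then apply Markov's inequality plus Borel--Cantelli on $\Omega$ alone. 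Your route requires the extra Markov step (and consequently a slightly faster choice of tolerance, $\varepsilon_k^2$, to absorb the loss), but it has the minor pedagogical advantage of making the object of interest a single scalar random variable rather than a set in a product space. Both are valid; the conclusions and the role of Fubini (whose applicability you correctly flag as the one nontrivial measurability point) are the same.

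One small remark: in the final step you let the index $k(\omega)=\max\{k_0(\omega),k_1\}$ depend on $\omega$, which is fine since the proposition only asserts existence of $N_0,n_0$ for a.e.\ fixed $\omega$; it also uses implicitly that $\varepsilon_k$ is nonincreasing so that $\varepsilon_{k(\omega)}\le\varepsilon_{k_1}\le\varepsilon$. This is correct, but worth stating explicitly since the monotonicity is what makes the deterministic cutoff $k_1$ suffice.
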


\begin{proof}
For any $i\in \mathbb N$, by Corollary \ref{coro-1}, we can find $N_i, n_i\in \mathbb N$ big enough such that
  $$\sup_{\xi_0 \in B_H(K)} \PP (\tau_{N_i, n_i}(\xi_0) < T) \leq \frac1{2^i}. $$
Define the events
  $$A_i= \big\{(\xi_0,\omega)\in B_H(K)\times \Omega: \tau_{N_i, n_i}(\xi_0,\omega) <T \big\}, \quad i\in \mathbb N;$$
then for any $i\geq 1$, by Fubini's theorem,
  $$(\mu\otimes \PP) (A_i) = \int_{B_H(K)\times \Omega} \textbf{1}_{A_i}(\xi_0,\omega) \,d(\mu\otimes \PP) = \int_{B_H(K)} \PP (\tau_{N_i, n_i}(\xi_0) < T) \, d\mu(\xi_0) \leq \frac1{2^i}.$$
Thus $\sum_{i\geq 1} (\mu\otimes \PP) (A_i) \leq 1 <\infty$. By Borel-Cantelli's lemma, for $(\mu\otimes \PP)$-a.e. $(\xi_0,\omega)\in B_H(K)\times \Omega$, there is $i_0= i_0(\xi_0,\omega)$ such that
  $$(\xi_0,\omega) \in \bigcap_{i\geq i_0} A_i^c= \bigcap_{i\geq i_0} \{ \tau_{N_i, n_i}(\xi_0,\omega) \geq T \}. $$
Equivalently,
  $$(\mu\otimes \PP) \bigg(\bigcup_{N=1}^\infty \bigcap_{i= N}^\infty \{ \tau_{N_i, n_i}(\xi_0,\omega) \geq T \} \bigg) =1. $$
By Fubini's theorem, there exists a full probability event $\Omega_0\subset \Omega$ such that for all $\omega\in \Omega_0$,
  $$1= \mu\bigg(\bigcup_{N=1}^\infty \bigcap_{i= N}^\infty A_i^c(\omega) \bigg) =\lim_{N\to \infty} \mu\bigg(\bigcap_{i= N}^\infty A_i^c(\omega) \bigg), $$
where $A_i^c(\omega)= \{\xi_0 \in B_H(K): \tau_{N_i, n_i}(\xi_0,\omega) \geq T \}$ is the section of $A_i^c$ at $\omega$. In particular,
  \begin{equation}\label{eq-1}
  \lim_{i\to \infty} \mu\big( \{\xi_0 \in B_H(K): \tau_{N_i, n_i}(\xi_0,\omega) \geq T \} \big) =1.
  \end{equation}
      Now for any $\ep>0$, there exists $i$ big enough such that
  $$\mu\big( \{\xi_0 \in B_H(K): \tau_{N_i, n_i}(\xi_0,\omega) \geq T \} \big)\geq 1-\ep. $$
\end{proof}

%

  \begin{proof}[Proof of Theorem \ref{thm:main}]
  Finally, having found  $N_{0},n_{0}\in\N$ and $\omega\in\Omega$ as in Proposition~\ref{prop:thm1a} or Proposition~\ref{prop:thm1b}
 we obtain a \emph{deterministic} vector field
  $$v(\omega, t, x)= \frac{C_{\nu}}{\| \theta^{N_{i}} \|_{\ell^2}}  \sum_{k \in \mathbb{Z}_0^3}
  \sum_{\alpha = 1}^2 \theta_k^{N_{i}}  \sigma_{k, \alpha}(x) \partial_t W_t^{k, \alpha, n_{i}}(\omega) $$
and finish the proof of Theorem \ref{thm:main}.
  \end{proof}

 We complete this section with the following remark.

\begin{remark}\label{subsec-3-rem}
It is natural to ask whether there exists a deterministic vector field $v$ such that, for any $\xi_0 \in B_H(K)$, the equation \eqref{NS random} has a global solution on $[0,T]$. This assertion will follow if one can show that the set $A_i^c(\omega)= \{\xi_0 \in B_H(K): \|\xi^{N_i,n_i}_{R_K}(\cdot, \xi_0) \|_{C([0,T], H^{-\delta})} \leq R_K \}$ is dense in $B_H(K)$ in the weak topology. Indeed, it is not difficult to show that, for the deterministic equation \eqref{NS random}, the mapping $B_H(K) \ni \xi_0 \mapsto \|\xi(\cdot, \xi_0)\|_{C([0,T], H^{-\delta})}$ is continuous with respect to the weak topology on $B_H(K)$. Thus, if $A_i^c(\omega)$ is dense in $B_H(K)$, we can show that $\|\xi^{N_i,n_i}_{R_K}(\cdot, \xi_0) \|_{C([0,T], H^{-\delta})} \leq R_K$ for all $\xi_0\in B_H(K)$.
\end{remark}

\section{Wong--Zakai result: the proof of Theorem \ref{wong zakai in law}} \label{sec-Wong-Zakai}

Throughout this section, the parameter $N$ is kept fixed and  omitted
for notational simplicity. In order to simplify the notations, we will also modify the
sub/superscrips from the notations of the previous section.

Fix a finite dimensional Brownian motion $W = (W^{k, \alpha})_{k, \alpha}$,
with $k \in \tmop{supp} \theta^N$, $\alpha = 1, 2$, on a probability space
$(\Omega, \mathcal{F}, \mathbb{P})$ and let $(\mathcal{F}_t)_{t \geqslant 0}$
be its normal filtration. Let $W^n = (W^{k, \alpha, n})_{k, \alpha}$ be its
piecewise linear approximation based on a sequence of partitions $(\pi^n)_{n
\in\bN}$ of the interval $[0, T]$ with vanishing mesh size $h^n=O(\frac1n)$. In other words, the sample paths of $W^n$ are of bounded
variation and the noise term in {\eqref{random PDE with cut-off}} is given by
the classical Riemann integral. We note that the piecewise linear
approximation is not adapted to $(\mathcal{F}_t)_{t \geqslant 0}$, but it is adapted to $(\mathcal{F}_{t + h^n})_{t \geqslant 0}$.

The proof of Theorem \ref{wong zakai in law} relies on the framework of
unbounded rough drivers as developed in {\cite{BaGu15}}, {\cite{DeGuHoTi16}}
and in the context of the Navier--Stokes system in {\cite{HLN}},
{\cite{HLN19}}. To be more precise, recalling that due to the cut-off
$\theta^N$, the considered noise is finite dimensional, integrating
{\eqref{random PDE with cut-off}} in time over an interval $[s, t] \subset [0,
T]$ and iterating the equation into itself we may rewrite {\eqref{random PDE
with cut-off}} as
\begin{equation}
  \label{expansion equation} \delta \xi_{st}^{R, n} = \int_s^t [\Delta
  \xi_r^{R, n} - f_R (\| \xi_r^{R, n} \|_{- \delta})\mathcal{L}_{u_r^{R, n}}
  \xi_r^{R, n}] dr + A_{st}^{n, 1} \xi_s^{R, n} + A_{st}^{n, 2} \xi_s^{R, n} +
  \xi_{st}^{R, n, \natural}
\end{equation}
with unbounded rough drivers
\begin{equation}
  A_{st}^{n, 1} \phi = \frac{C_{\nu}}{\| \theta^N \|_{\ell^2}}  \sum_{k \in
  \mathbb{Z}_0^3} \sum_{\alpha = 1}^2 \theta^N_k \Pi (\sigma_{k, \alpha} \cdot
  \nabla \phi) \delta W_{st}^{k, \alpha, n} \label{eq:RD1}
\end{equation}
\begin{equation}
  A_{st}^{n, 2} \phi = \frac{C^2_{\nu}}{\| \theta^N \|_{\ell^2}^2}  \sum_{k, l
  \in \mathbb{Z}_0^3} \sum_{\alpha, \beta = 1}^2 \theta^N_k \Pi (\sigma_{k,
  \alpha} \cdot \nabla [\theta^N_\ell \Pi (\sigma_{\ell, \beta} \cdot \nabla \phi)])
  \mathbb{W}_{st}^{\ell, k, \beta, \alpha, n} \label{eq:RD2}
\end{equation}
where
\[ \mathbb{W}_{st}^{\ell, k, \beta, \alpha, n} = \int_s^t \delta W_{sr}^{\ell,
   \beta, n}  \dot{W}_r^{k, \alpha, n} dr. \]
A detailed discussion of this step can be found in Section 2.5 in
{\cite{HLN}}. We recall that the term $\xi^{R, n, \natural}$ is defined
through {\eqref{expansion equation}} and shall be a remainder in the sense
that it has sufficient time regularity, namely, $\xi^{R, n, \natural} \in
C^{3\alpha}_{2,\rm{loc}} ([0, T] ; H^{- 3})$.

According to Exercise 10.14 in \cite{FrHa14},
the approximate rough path
$(W^n, \mathbb{W}^n)$ converges to $(W, \mathbb{W})$ in the rough path
topology $\PP$-a.s. and in every $L^q (\Omega)$ for $q \in [1, \infty)$. Consequently, we deduce the
convergence of the associated unbounded rough drivers $(A^{n, 1}, A^{n, 2})$ to the limit
unbounded rough driver given by
\begin{equation}
  A_{st}^1 \phi = \frac{C_{\nu}}{\| \theta^N \|_{\ell^2}}  \sum_{k \in
  \mathbb{Z}_0^3} \sum_{\alpha = 1}^2 \theta^N_k \Pi (\sigma_{k, \alpha} \cdot
  \nabla \phi) \delta W_{st}^{k, \alpha} \label{eq:RD11}
\end{equation}
\begin{equation}
  A_{st}^2 \phi = \frac{C^2_{\nu}}{\| \theta^N \|_{\ell^2}^2}  \sum_{k, l \in
  \mathbb{Z}_0^3} \sum_{\alpha, \beta = 1}^2 \theta^N_k \Pi (\sigma_{k,
  \alpha} \cdot \nabla [\theta^N_\ell \Pi (\sigma_{\ell, \beta} \cdot \nabla \phi)])
  \mathbb{W}_{st}^{\ell, k, \beta, \alpha} \label{eq:RD22}
\end{equation}
where the associated rough path $(W, \mathbb{W})$ corresponds to the
Stratonovich lift, i.e.,
\[ \mathbb{W}_{st}^{\ell, k, \beta, \alpha} = \int_s^t \delta W_{sr}^{\ell, \beta}
   \circ dW_r^{k, \alpha} . \]
This means that the operators $(A^{n, 1}, A^{n, 2})$ satisfy the bounds
\begin{equation*}
  \|A_{st}^{n, 1} \|_{\mathcal{L} (H^{-k} ; H^{-(k+1)})} \lesssim C_{A^{n}} |t-s|^{\alpha}, \qquad \|A_{st}^{n, 2} \|_{\mathcal{L} (H^{-k} ; H^{-(k+2)})}
  \le C_{A^n} |t-s|^{2\alpha},
\end{equation*}
where the first bound holds for $k\in \{0,2\}$ whereas the second one for $k\in\{0,1\}$. In addition, by Exercise 10.14 in \cite{FrHa14} we have for all $q \in [1,
\infty)$
\begin{equation}
  \sup_{n\in\bN} \mathbb{E} [C_{A^{n}}^q] < \infty  \label{eq:13}
\end{equation}
and $\PP$-a.s.
\begin{equation}\label{eq:14}
\sup_{n\in\bN} C_{A^{n}}(\omega)\le C(\omega)
\end{equation}
for some random constant $C(\omega)$.
Accordingly, the rough path formulation of (\ref{SPDE with cut-off}) reads
as
\begin{equation}
  \delta \xi_{st}^R = \int_s^t [\Delta \xi_r^R - f_R (\| \xi_r^R \|_{-
  \delta})\mathcal{L}_{u_r^R} \xi_r^R] dr + A_{st}^1 \xi_s^R + A_{st}^2
  \xi_s^R + \xi_{st}^{R, \natural} . \label{eq:rp3}
\end{equation}
In view of Corollary 5.2 in {\cite{FrHa14}}, we aim to conclude that an
adapted rough path solution to (\ref{SPDE with cut-off}) is also a solution in
the classical (stochastic) sense. Since the adaptedness is the key point
needed for the construction of the stochastic integral, we have to make sure
that our Wong--Zakai convergence result produces adapted solutions. By merely
pathwise arguments we are not able to construct adapted solutions to
{\eqref{eq:rp3}}. The principal difficulty is that we are not able to prove
uniqueness of rough path solutions to {\eqref{eq:rp3}} and therefore a
pathwise compactness argument does not preserve measurability in $\omega$. To
overcome this obstacle, we combine rough path techniques together with
probabilistic arguments, namely, the stochastic compactness method based on
Skorokhod representation theorem. This permits to make use of the uniqueness
for the stochastic version of {\eqref{eq:rp3}}, i.e., the equation (\ref{SPDE
with cut-off}), and eventually construct adapted solutions to
{\eqref{eq:rp3}}.

An additional technical difficulty follows from the fact that the
approximation $W^n$ is not adapted to $(\mathcal{F}_t)_{t \geqslant 0}$ but
only to $(\mathcal{F}_{t + h^n})_{t \geqslant 0}$. While this point can be
fixed for instance by replacing piecewise linear approximations by one-sided
mollifications which remain adapted, we choose to work with piecewise linear
approximations as they are better suited for applications in numerical
analysis.

As the first step, we establish the necessary uniform estimates.

\begin{proposition}
  \label{prop:apriori}
  There exists a unique solution $\xi^{R, n}$ to
  {\eqref{expansion equation}} and it is adapted to $(\mathcal{F}_{t +
  h^n})_{t \geqslant 0}$. Moreover, it holds
    \[ \| \xi^{R, n} \|^2_{L^{\infty}_{T} H} + \| \xi^{R, n} \|^2_{L^2_{T}
     H^1} \leqslant C (R, K), \]
      \[ \| \xi^{R, n} \|_{C^{\alpha}_{T}H^{- 1}} \lesssim (1 + C
     (R, K)) (1+C_{A^{n}}^{2}), \]
  and   there exists a  deterministic constant $L > 0$ such that whenever $C_{A^{n}}|t-s|^{3\alpha}\leq L$ we have
  \[ \| \xi_{s t}^{R, n, \natural} \|_{ H^{- 3}} \lesssim (1 + C (R, K))(1+C_{A^{n}}^{2})|t-s|^{3\alpha}, \]
  for some deterministic implicit constant independent of $n$.
\end{proposition}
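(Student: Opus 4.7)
The plan is to first solve \eqref{random PDE with cut-off} by classical means, since $W^n$ is piecewise linear in time and the noise term is therefore a smooth, divergence-free transport, then recast the solution as an equation in the abstract unbounded-rough-driver framework of Section~\ref{ss:rp} and apply Theorem~\ref{Thm2.5Abstract}.

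First I would establish classical existence, uniqueness, adaptedness and the a priori energy bound. Since $\partial_t W^{k,\alpha,n}$ is piecewise constant on $[0,T]$ and each $\sigma_{k,\alpha}$ is smooth and divergence-free, equation \eqref{random PDE with cut-off} is a 3D Navier--Stokes system with a bounded, time-measurable, divergence-free transport perturbation and a bounded scalar factor $f_{R}\in[0,1]$. A standard Galerkin argument yields a unique solution satisfying the energy bound \eqref{random PDE with cut-off energy bound}, exactly as in \cite[Theorem~1.3]{FL19}; the only facts needed in the energy identity are the cancellations $\langle\mathcal L_{u}\xi,\xi\rangle_{H}=0$ and $\langle\Pi(\sigma_{k,\alpha}\cdot\nabla\xi),\xi\rangle_{H}=0$, the latter because $\sigma_{k,\alpha}$ is divergence-free. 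Adaptedness to $(\mathcal F_{t+h^{n}})_{t\geq 0}$ is then immediate since, for $r\leq t$, the value $W^{n}_{r}$ depends on the Brownian path only on $[0,t+h^{n}]$.

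Next I would recast the equation in rough-path form. Integrating \eqref{random PDE with cut-off} over $[s,t]$ and substituting the equation once into the transport integral --- the expansion procedure from \cite[Section~2.5]{HLN} --- produces exactly \eqref{expansion equation} with the unbounded rough drivers $(A^{n,1},A^{n,2})$ of \eqref{eq:RD1}--\eqref{eq:RD2}. The Chen relation \eqref{eq:chen-relation} and the operator bounds \eqref{ineq:UBRcontrolestimates} with constant $C_{A^{n}}$ are verified in \cite{HLN,HLN19}, and we import them here.

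The substantive step is verifying the drift hypotheses of Theorem~\ref{Thm2.5Abstract} for
$$\mu_{t}:=\int_{0}^{t}\bigl[\Delta\xi^{R,n}_{r}-f_{R}(\|\xi^{R,n}_{r}\|_{-\delta})\,\mathcal L_{u^{R,n}_{r}}\xi^{R,n}_{r}\bigr]\,dr.$$
For the Lipschitz $H^{-2}$-bound I use $\|\Delta\xi\|_{H^{-2}}\leq\|\xi\|_{H}$, integration by parts in the convective term against a test function in $H^{2}$, and \eqref{trilinear form estimate} with $(m_{1},m_{2},m_{3})=(1,1,0)$ combined with the Biot--Savart estimate $\|u\|_{H^{1}}\lesssim\|\xi\|_{H}$, giving $\|\mathcal L_{u}\xi\|_{H^{-2}}\lesssim\|\xi\|_{H}^{2}$. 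For the $\tfrac12$-Hölder $H^{-1}$-bound I use $\|\Delta\xi\|_{H^{-1}}\leq\|\xi\|_{H^{1}}$ and $\|\mathcal L_{u}\xi\|_{H^{-1}}\lesssim\|\xi\|_{H}\|\xi\|_{H^{1}}$, the latter from \eqref{trilinear form estimate} with $(m_{1},m_{2},m_{3})=(1,0,1)$, and then Cauchy--Schwarz in time against the $L^{2}_{T}H^{1}$ bound. Since $f_{R}\in[0,1]$ and the energy estimate is deterministic, both $C_{\mu,1}$ and $C_{\mu,2}$ are deterministic constants depending only on $R$ and $K$. Once these inputs are in place, Theorem~\ref{Thm2.5Abstract} applies directly and delivers both the local remainder estimate for $\xi^{R,n,\natural}$ and the global-in-time Hölder estimate for $\xi^{R,n}$ in $C^{\alpha}_{T}H^{-1}$; for $\alpha\in(\tfrac13,\tfrac12]$ one has $(1-\alpha)/\alpha\in[1,2)$, so the factor $1+C_{A^{n}}^{(1-\alpha)/\alpha}$ appearing in the global bound of Theorem~\ref{Thm2.5Abstract} is absorbed into $(1+C_{A^{n}}^{2})$, yielding the stated prefactor $(1+C(R,K))(1+C_{A^{n}}^{2})$. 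The main obstacle is the careful propagation of the Leray projection through the trilinear estimate \eqref{trilinear form estimate} and the iterated rough driver $A^{n,2}$ in order to certify \eqref{ineq:UBRcontrolestimates} with the right exponents in the Sobolev scale; this bookkeeping is already carried out in \cite{HLN,HLN19}, which we invoke as a black box.
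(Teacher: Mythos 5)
Your proposal follows essentially the same route as the paper: classical existence, uniqueness and adaptedness from the smoothness of the piecewise-linear noise, the deterministic energy bound imported from \eqref{random PDE with cut-off energy bound}, the drift estimates in $H^{-2}$ and $H^{-1}$ via the trilinear bound \eqref{trilinear form estimate}, and then Theorem~\ref{Thm2.5Abstract} for the remainder and H\"older estimates. One small bookkeeping point worth flagging: the $H^{-2}$ bound on the Lie derivative $\mathcal{L}_{u}\xi=(u\cdot\nabla)\xi-(\xi\cdot\nabla)u$ requires \emph{two} different index choices in \eqref{trilinear form estimate}, namely $(m_1,m_2,m_3)=(1,1,0)$ for the transport term after the integration by parts you describe, and $(m_1,m_2,m_3)=(0,0,2)$ for the vortex-stretching term $(\xi\cdot\nabla)u$; the single choice $(1,1,0)$ you quote does not apply to the latter, although once both are used the conclusion $\|\mathcal{L}_u\xi\|_{H^{-2}}\lesssim\|\xi\|_H(1+\|u\|_{H^1})$ that you (equivalently, via Biot--Savart) arrive at is correct.
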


\begin{proof}
  Existence and uniqueness of a solution $\xi^{R, n}$ follows by classical
  arguments since the driver $(A^{n, 1}, A^{n, 2})$ is smooth. Since $W^n$ is
  adapted to $(\mathcal{F}_{t + h^n})_{t \geqslant 0}$, the same remains valid
  for the solution $\xi^{R, n}$.

  The first bound in the statement of the proposition follows from
  {\eqref{random PDE with cut-off energy bound}}. For the other two estimates,
  we intend to apply Theorem \ref{Thm2.5Abstract}. Thus, we shall derive the necessary bounds for the drift term
  $$
  \delta \mu_{s t} = \int_s^t [\Delta \xi_r^{R, n} - f_R
     (\| \xi_r^{R, n} \|_{H^{- \delta}})\mathcal{L}_{u_r^{R, n}} \xi_r^{R, n}] dr
  $$
  in $H^{-2}$ and $H^{-1}$.
To this end, we observe that since $u^{R, n}$ is divergence free we have
  \[\aligned \langle \mathcal{L}_{u_r^{R, n}} \xi_r^{R, n}, \phi \rangle &= \langle
     (u_r^{R, n} \cdummy \nabla) \xi_r^{R, n}, \phi \rangle - \langle
     (\xi_r^{R, n} \cdummy \nabla) u_r^{R, n}, \phi \rangle \\
     &= - \langle (u_r^{R, n} \cdummy \nabla) \phi, \xi_r^{R, n} \rangle -
     \langle (\xi_r^{R, n} \cdummy \nabla) u_r^{R, n}, \phi \rangle .
     \endaligned \]
  Hence in view of {\eqref{trilinear form estimate}} with $m_1 = m_2 =
  1$, $m_3 = 0$ for the first term and $m_1= m_2 = 0$, $m_3 = 2$ for the
  second term we obtain
  \[ \| \mathcal{L}_{u_r^{R, n}} \xi_r^{R, n} \|_{H^{- 2}} \lesssim \| \xi_r^{R,
     n} \|_H (1 + \| u^{R, n}_r \|_{H^1}) . \]
  Therefore due to {\eqref{random PDE with cut-off energy bound}}
  \[ \|\delta{\mu}_{s t}\|_{H^{-2}} \lesssim \int_s^t \| \xi_r^{R, n} \|_H (1 + \| u^{R,
     n}_r \|_{H^1}) d r \lesssim (t - s) (1 + C (R, K)), \]
  with a deterministic implicit constant and Theorem~\ref{Thm2.5Abstract} implies
  \[ \| \xi^{R, n, \natural}_{s t} \|_{H^{- 3}} \lesssim (1 + C (R, K))(1+C_{A^{n}}^{2})|t-s|^{3\alpha}, \]
  which gives the desired bound of the remainder.

  Finally, we observe that by {\eqref{trilinear form estimate}} the drift can
  be estimated in $H^{- 1}$ as follows
  \[\aligned \|\delta\mu_{s t}\|_{H^{- 1}}
     & \lesssim \int_s^t \| \xi^{R, n}_r \|_{H^1} (1 + \| u^{R, n}_r \|_{H^1}) d
     r \\
  & \lesssim (t - s)^{\frac{1}{2}} | \xi^{R, n} |_{L^2_{T} H^1} (1 + |
     u^{R, n} |_{L^{\infty}_{T} H^1}) \lesssim |t - s|^{\frac{1}{2}} (1 +
     C (R, K)) .  \endaligned \]
  Hence Theorem \ref{Thm2.5Abstract} implies
  \[ |\xi^{R, n}|_{C^{\alpha}_{T}H^{- 1}} \lesssim (1 + C
     (R, K)) (1+C^{2}_{A^{n}})\]
  and the proof is complete.
\end{proof}

Now, we have all in hand to prove the following result (recall that we fix $N\in \mathbb Z_+$ in this section).

\begin{proposition}\label{prop-intermediate}
Let $\{\xi^n_0 \}_{n\geq 1} \subset H$ be a sequence satisfying $\|\xi^n_0\|_H \leq K$ for all $n\geq 1$, and $\xi^{R,n}$ the unique solution to \eqref{random PDE with cut-off} with $\xi^{R, n}(0)= \xi^n_0$. Assume that $\xi^n_0$ converges weakly in $H$ to some $\xi_0$ as $n\to \infty$; then $\xi^{R,n}$ converge in probability in the topology of $C ([0, T] ; H^{- \delta})$ to $\xi^{R}$, the solution to \eqref{SPDE with cut-off} with initial value $\xi_0$.
\end{proposition}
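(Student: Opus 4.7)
The plan is to combine the uniform rough-path estimates of Proposition~\ref{prop:apriori} with a stochastic compactness argument based on the Skorokhod representation theorem, and then upgrade convergence in law to convergence in probability via the Gy\"ongy--Krylov criterion; pathwise uniqueness of \eqref{SPDE with cut-off}, which is classical (cf.\ \cite[Theorem~1.3]{FL19}), will close the loop. First, combining Proposition~\ref{prop:apriori} with the moment bound \eqref{eq:13}, the sequence $\xi^{R,n}$ is uniformly bounded in $L^q(\Omega; L^\infty_T H\cap L^2_T H^1\cap C^\alpha_T H^{-1})$ for every $q<\infty$, and the remainders $\xi^{R,n,\natural}$ are uniformly bounded in $L^q(\Omega; C^{3\alpha}_{2,\mathrm{loc}}([0,T];H^{-3}))$. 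The Aubin--Lions type compact embedding $L^\infty_T H\cap C^\alpha_T H^{-1}\hookrightarrow C([0,T];H^{-\delta})$, together with Chebyshev's inequality, yields tightness of the laws of $\xi^{R,n}$ in $C([0,T];H^{-\delta})$ and, in its weak form, in $L^2(0,T;H)$. We then consider the joint laws of $(\xi^{R,n},\xi^{R,n,\natural},W^n,\mathbb{W}^n,W)$ on the product of these spaces with the rough-path spaces $C^\alpha_2\times C^{2\alpha}_2$, recalling that $(W^n,\mathbb{W}^n)\to(W,\mathbb{W})$ almost surely in the rough-path topology.

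To apply Gy\"ongy--Krylov, we fix two subsequences $n_k,m_k\to\infty$ and examine the joint distribution of both tuples together with the common driver $W$. Extracting a further weakly convergent subsequence and invoking the Skorokhod representation theorem (in Jakubowski's version for quasi-Polish spaces, to accommodate the weak $L^2_T H^1$ component), we pass to a new probability space $(\tilde\Omega,\tilde{\mathcal F},\tilde{\mathbb P})$ on which all variables converge almost surely to limits with the same joint laws as the originals. Because $W^n$ is a deterministic functional of $W$, the two new copies of the Brownian motion coincide as a single $\tilde W$, while $\tilde W^{n_k},\tilde W^{m_k}$ converge almost surely to $\tilde W$ in the rough-path topology. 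Using continuity of the rough integration, the uniform control of the remainders, and the strong $C_T H^{-\delta}$ convergence together with weak $L^2_T H^1$ convergence to pass to the limit in the nonlinear drift $f_R(\|\xi^{R,n}_r\|_{-\delta})\mathcal{L}_{u^{R,n}_r}\xi^{R,n}_r$ (a standard Navier--Stokes compactness argument, together with continuity of $f_R$), we conclude that both limits $\tilde\xi^R$ and $\tilde\eta^R$ solve the rough PDE \eqref{eq:rp3} driven by the Stratonovich-lifted $(A^1,A^2)$ with the common initial datum $\xi_0$; the strong convergence $\xi^n_0\to\xi_0$ in $H^{-\delta}$ follows from weak convergence in $H$ by Rellich compactness.

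The decisive step is to recover adaptedness and identify the limit as a stochastic solution. Since $\xi^{R,n}$ is adapted to the shifted filtration $(\mathcal F_{t+h^n})_{t\ge0}$ and $h^n\to0$, a martingale argument---passing to the limit in characteristic functions of the increments $\tilde W^{n_k}_t-\tilde W^{n_k}_{s+h^{n_k}}$ conditioned on $(\tilde\xi^{R,n_k}|_{[0,s]},\tilde W^{n_k}|_{[0,s]})$ and exploiting uniform exponential moments---shows that $\tilde W$ is a Brownian motion with respect to the filtration generated by $(\tilde\xi^R,\tilde\eta^R,\tilde W)$. By \cite[Corollary~5.2]{FrHa14}, the rough integral against $(A^1,A^2)$ then coincides with the corresponding Stratonovich stochastic integral, so both $\tilde\xi^R$ and $\tilde\eta^R$ are adapted solutions of \eqref{SPDE with cut-off} with the same initial condition and the same Brownian motion $\tilde W$. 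Pathwise uniqueness forces $\tilde\xi^R=\tilde\eta^R$ almost surely, and the Gy\"ongy--Krylov lemma upgrades convergence in law to convergence in probability of $\xi^{R,n}$ to $\xi^R$ in $C([0,T];H^{-\delta})$. The main obstacle is precisely this interplay: the rough-path formulation is pathwise and is a priori compatible with an entire family of It\^o--Stratonovich corrections, and the non-adaptedness of $W^n$ forces the adaptedness of the limit to be recovered through a delayed-martingale argument tolerant of the vanishing shift $h^n$, so that probabilistic and rough-path techniques must be combined exactly at the identification step.
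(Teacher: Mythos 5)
Your overall architecture matches the paper's proof very closely: uniform rough-path bounds from Proposition~\ref{prop:apriori}, compact embedding into $C([0,T];H^{-\delta})\cap L^2(0,T;H)$, tightness of the joint laws with the rough lift, Skorokhod representation, passage to the limit in the rough PDE on the new probability space, identification of $\tilde W$ as a Brownian motion with respect to the augmented filtration via the delayed $(\mathcal F_{t+h^n})$-measurability of $W^n$, and finally Gy\"ongy--Krylov with pathwise uniqueness to return to the original probability space. The only noticeable cosmetic deviations (Jakubowski rather than the Polish-space Skorokhod theorem, front-loading the two-subsequence Gy\"ongy--Krylov setup rather than treating it as a final step) are harmless and would also work.

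However, there is a genuine gap at the identification step. After establishing that $\tilde W$ is a $(\tilde{\mathcal F}_t)$-Brownian motion, you invoke \cite[Corollary~5.2]{FrHa14} to replace the rough integral by the Stratonovich stochastic integral. This step is not available yet: to apply that corollary one must know that the second-order component $\tilde{\mathbb W}$ obtained on the new probability space is \emph{the} Stratonovich iterated integral of $\tilde W$ with respect to $(\tilde{\mathcal F}_t)$, i.e.\ $\tilde{\mathbb W}^{\ell,k,\beta,\alpha}_{st}=\int_s^t\delta\tilde W^{\ell,\beta}_{sr}\circ d\tilde W^{k,\alpha}_r$. Equality in law of $(\tilde W,\tilde{\mathbb W})$ with $(W,\mathbb W)$ gives only that $\tilde{\mathbb W}$ is \emph{distributed} like a Stratonovich lift; it does not by itself identify $\tilde{\mathbb W}$ with the stochastic integral on $(\tilde\Omega,\tilde{\mathcal F},\tilde{\mathbb P})$, because the stochastic integral is not a pathwise functional and depends on the filtration. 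The paper's Step~3 devotes a substantial argument to exactly this: one first proves $\tilde M_t:=\tilde{\mathbb W}^{\ell,k,\beta,\alpha}_{st}-\tfrac12(t-s)\delta_{k=\ell,\alpha=\beta}$ is an $(\tilde{\mathcal F}_t)$-martingale by transferring the corresponding property from the original space through equality of joint laws, then computes $\llangle\tilde M\rrangle$ and $\llangle\tilde M,\tilde W^{k,\alpha}\rrangle$ by the same device, and concludes via $\llangle\tilde M-\int_s^{\cdot}\tilde W^{\ell,\beta}_{sr}\,d\tilde W^{k,\alpha}_r\rrangle=0$. Without this quadratic-variation identification your appeal to \cite[Corollary~5.2]{FrHa14} is unjustified, and you would not be able to conclude that $\tilde\xi^R$ (and $\tilde\eta^R$) actually solves \eqref{SPDE with cut-off} in the stochastic sense, which is precisely what pathwise uniqueness needs.
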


\begin{proof}
  \tmtextit{Step 1: Tightness.} We define the space
  \[ \mathcal{X} \assign \mathcal{X}_{\xi} \times \mathcal{X}_{\tmop{RP}}, \]
  \[ \mathcal{X}_{\xi} \assign L^2 (0, T ; H) \cap C ([0, T] ; H^{- \delta}),
     \qquad \mathcal{X}_{\tmop{RP}} \assign C_{2}^{\alpha} ([0, T] ;
     \bR^m) \times C^{2\alpha}_2 ([0, T] ;
     \bR^{m\times m}) , \]
     where $m\in\bN$ is the dimension of the Brownian motion $W$.
  Next, we claim  that $L^{\infty} (0, T ; H) \cap L^2 (0, T ; H^1) \cap
  C^{\alpha} ([0, T] ; H^{- 1})$  is compactly embedded into
  $\mathcal{X}_{\xi}$. Indeed, the compactness of the embedding into $L^{2}(0,T;H)$ follows from \cite[Corollary~1.8.4]{BrFeHobook}. On the other hand, by \cite[Theorem~1.8.5]{BrFeHobook} we obtain a compact embedding into the space of weakly continuous functions $C_{\rm{weak}}([0,T];H)$, which is continuously embedded into $C([0,T];H^{-\delta})$ for any $\delta>0$.

%
%
%
%


  Consequently, due to Proposition \ref{prop:apriori} and
  in particular due to the fact that the right hand sides of the estimates are
  uniformly bounded in expectation due to {\eqref{eq:13}}, the family of the
  pushforward measures $(\xi^{R,n})_{\sharp} \PP$ is tight on $\mathcal{X}_{\xi}$.
  In order to apply the theory of rough paths for the passage limit we shall
  also need the structure of the noise. Since due to Exercise 10.14 in {\cite{FrHa14}}, the
  approximate rough path $(W^n, \mathbb{W}^n)$ converges $\mathbb{P}$-a.s. in
  the rough path topology to the Stratonovich lift of a Brownian motion $W$,
  the family of joint laws of $(W^n, \mathbb{W}^n)$ is tight on
  $\mathcal{X}_{\tmop{RP}}$ which is separable. Thus, we deduce that the joint
  laws $(\xi^{R,n}, W^n, \mathbb{W}^n)_{\sharp} \PP$ are tight as a family of
  probability measures on $\mathcal{X}$.

  From the Skorokhod representation
  theorem which applies to Polish spaces (see e.g. Section~2.6 in {\cite{BrFeHobook}}) there exists a
  probability space $(\tilde{\Omega}, \tilde{\mathcal{F}},
  \tilde{\mathbb{P}})$ and random variables
  \[(\tilde{\xi}^{R, n}, \tilde{W}^n, \tilde{\mathbb{W}}^n) :
     \tilde{\Omega} \rightarrow \mathcal{X},\quad n\in\bN, \qquad
     (\tilde{\xi}^R, \tilde{W}, \tilde{\mathbb{W}}) : \tilde{\Omega}
     \rightarrow \mathcal{X}, \]
  such that (up to a subsequence)
  \begin{enumerate}
    \item $(\tilde{\xi}^{R, n}, \tilde{W}^n, \tilde{\mathbb{W}}^n)  \rightarrow (\tilde{\xi}^R, \tilde{W}, \tilde{\mathbb{W}})$  in $\mathcal{X}$ $\tilde{\PP}$-a.s.
    as $n \rightarrow \infty$,

    \item $(\tilde{\xi}^{R, n}, \tilde{W}^n, \tilde{\mathbb{W}}^n)_{\sharp} \tilde{\PP} = (\xi^{R,n}, W^n,
    \mathbb{W}^n)_{\sharp} \PP $ for all $n\in\bN$.
  \end{enumerate}
  We  define $(\tilde{\mathcal{F}}_t)_{t \geqslant 0}$ as the
  augmented canonical filtration generated by $     (\tilde{\xi}^R, \tilde{W}, \tilde{\mathbb{W}})$, that is, we let
  \[ \tilde{\mathcal{F}}_t \assign \sigma (\sigma ( \tilde{\xi}^{R}_s,\tilde{W}_{rs} ,
     \tilde{\mathbb{W}}_{r s} ; 0 \leqslant r \leqslant s \leqslant t) \cup \{
     N ; \tilde{\mathbb{P}} (N) = 0 \}), \qquad t \geqslant 0. \]

  \tmtextit{Step 2: Passage to the limit.} As the next step, we shall prove
  that $(\tilde{\xi}^{R, n}, \tilde{W}^n, \tilde{\mathbb{W}}^n)$ gives rise to a solution of {\eqref{expansion equation}}
  on the new probability space. First, we shall identify the corresponding
  rough path. To this end, we observe that Chen's relation giving the
  necessary compatibility condition between components of a rough path holds
  for $(\tilde{W}^n, \tilde{\mathbb{W}}^n)$ as well. Indeed, it follows from
  the equality of laws
  \[\aligned &\ \tilde{\mathbb{P}} \left( \delta \tilde{\mathbb{W}}_{r s t}^n =
     \tilde{W}_{r s}^n \otimes \tilde{W}_{s t}^n \quad \mbox{for all }
     \quad 0 \leqslant r \leqslant s \leqslant t \leqslant T \right) \\
  =&\ \mathbb{P} \left( \delta \mathbb{W}_{r s t}^n = W_{r s}^n \otimes W_{s
     t}^n \quad \mbox{for all } \quad 0 \leqslant r \leqslant s
     \leqslant t \leqslant T \right) = 1. \endaligned \]
  In other words, $(\tilde{W}^n, \tilde{\mathbb{W}}^n)$ is a well-defined
  rough path $\tilde{\mathbb{P}}$-a.s. Hence we may define the unbounded rough
  drivers $(\tilde{A}^{n, 1}, \tilde{A}^{n, 2})$ through the formulas
  {\eqref{eq:RD1}}, {\eqref{eq:RD2}} with $(W^n, \mathbb{W}^n)$ replaced by
  $(\tilde{W}^n, \tilde{\mathbb{W}}^n)$.

  Let us now define
  \begin{equation}
    \tilde{\xi}_{st}^{R, n, \natural} \assign \delta \tilde{\xi}_{st}^{R, n} -
    \int_s^t [\Delta \tilde{\xi}_r^{R, n} - f_R (\| \tilde{\xi}_r^{R, n} \|_{H^{-
    \delta}})\mathcal{L}_{u_r^{R, n}}  \tilde{\xi}_r^{R, n} ] dr -
    \tilde{A}_{st}^{n, 1}  \tilde{\xi}_s^{R, n} - \tilde{A}_{st}^{n, 2}
    \tilde{\xi}_s^{R, n} . \label{eq:rem}
  \end{equation}
  Recall that {\eqref{expansion equation}} was satisfied on the original probability
  space  even in the classical formulation \eqref{random PDE with cut-off} as the driving path $W^{n}$ is regular. In addition, the right hand side of {\eqref{eq:rem}} is a measurable function
  of $(\tilde{\xi}^{R, n}, \tilde{W}^n, \tilde{\mathbb{W}}^n)$, we deduce again by  the equality of joint laws
that $\tilde{\xi}^{R, n}$ is $\tilde{\mathbb{P}}$-a.s. a solution
  to
  \begin{equation}
    \label{expansion equation for tilde n} \delta \tilde{\xi}_{st}^{R, n} =
    \int_s^t [\Delta \tilde{\xi}_r^{R, n} - f_R (\| \tilde{\xi}_r^{R, n} \|_{H^{-
    \delta}})\mathcal{L}_{u_r^{R, n}}  \tilde{\xi}_r^{R, n}] dr +
    \tilde{A}_{st}^{n, 1}  \tilde{\xi}_s^{R, n} + \tilde{A}_{st}^{n, 2}
    \tilde{\xi}_s^{R, n} + \tilde{\xi}_{st}^{R, n, \natural},
  \end{equation}
  which is the rough path formulation of {\eqref{random PDE with cut-off}} on
  the probability space $(\tilde{\Omega}, \tilde{\mathcal{F}},
  \tilde{\mathbb{P}})$.

  Our goal is to pass to the limit in {\eqref{expansion equation for tilde n}}
  and identify the limit $\tilde{\xi}^R$ as a solution to {\eqref{eq:rp3}}
  where the rough driver is given by the Stratonovich lift of a Brownian
  motion. To this end, we note that the approach of Proposition
  \ref{prop:apriori} giving the uniform bounds can be applied to
  {\eqref{expansion equation for tilde n}} as well. In particular, in view of \eqref{eq:14} we obtain a pathwise uniform bound for the remainders $\tilde\xi^{R,n,\natural}$.
  Together with the $\tilde{\mathbb{P}}$-a.s.
  convergence of $(\tilde{W}^n, \tilde{\mathbb{W}}^n) \rightarrow (\tilde{W},
  \tilde{\mathbb{W}})$ in $\mathcal{X}_{\tmop{RP}}$, this permits to pass to
  the limit in {\eqref{expansion equation for tilde n}}. Note in particular
  that  passing to the limit in the Chen's relation guarantees that the
  limit $(\tilde{W}, \tilde{\mathbb{W}})$ is a rough path itself. Therefore,
  we obtain the convergence of the rough driver $(\tilde{A}^{n, 1},
  \tilde{A}^{n, 2})$ to $(\tilde{A}^1, \tilde{A}^2)$ given by
  {\eqref{eq:RD11}}, {\eqref{eq:RD22}} with $(W, \mathbb{W})$ replaced by
  $(\tilde{W}, \tilde{\mathbb{W}})$. Hence the limit satisfies
  \begin{equation}
    \label{expansion equation for tilde} \delta \tilde{\xi}_{st}^R = \int_s^t
    [\Delta \tilde{\xi}_r^R - f_R (\| \tilde{\xi}_r^R \|_{H^{-
    \delta}})\mathcal{L}_{u_r^R}  \tilde{\xi}_r^R] dr + \tilde{A}_{st}^1
    \tilde{\xi}_s^R + \tilde{A}_{st}^2  \tilde{\xi}_s^R + \tilde{\xi}_{st}^{R,
    \natural}
  \end{equation}
  for some remainder $\tilde{\xi}^{R, \natural}$ which belongs $\tilde\PP$-a.s. to $C^{3\alpha}_{2,\rm{loc}}([0,T];H^{-3})$.

  \tmtextit{Step 3: Identification of the limiting driver.} We have shown that
  the limit $\tilde{\xi}^R$ solves the rough path formulation of {\eqref{SPDE
  with cut-off}} and it only remains to prove that it is also a solution of
  {\eqref{SPDE with cut-off}} in the classical stochastic sense. To this end,
  it is necessary to identify $(\tilde{W}, \tilde{\mathbb{W}})$ as the
  Stratonovich lift of a Brownian motion.

  Since $(\tilde{W}^n, \tilde{\mathbb{W}}^n)$ is equal in law to $(W^n,
  \mathbb{W}^n)$ which converges $\mathbb{P}$-a.s. to the Stratonovich lift of
  the Brownian motion $W$, we deduce that $(\tilde{W}^n,
  \tilde{\mathbb{W}}^n)$ converges in law to $(W, \mathbb{W})$. As a
  consequence, $\tilde{W}$ has the same law as $W$ and therefore it is an increment of a Brownian motion. Next, we show that it
  is a Brownian motion with respect to $(\tilde{\mathcal{F}}_t)_{t
  \geqslant 0}$. To this end, fix arbitrary times
   $0 \leqslant r \leqslant s <
  t \leqslant T$ and an arbitrary continuous function $\gamma : C ([0, s] ;
  H^{- \delta}) \times  C^{0}_{2} ([r, s] ;
  \bR^m) \times C^{0}_2 ([r, s] ; \bR^{m \times m}) \rightarrow [0,
  1]$. Due to equality of joint laws it holds
  \[\aligned \tilde{\mathbb{E}} [\gamma (\tilde{\xi}^{R}|_{[0, s]} \nobracket, \tilde W|_{[r,s]},
     \tilde{\mathbb{W}} |_{[r, s]} \nobracket) \tilde{W}_{st}] &=
     \lim_{n \rightarrow \infty} \tilde{\mathbb{E}} [\gamma (
     \tilde{\xi}^{R,n} |_{[0, s]} \nobracket,\tilde{W}^{n}|_{[r, s]}, \tilde{\mathbb{W}}^n |_{[r, s]}
     \nobracket) \tilde{W}^n_{st}] \\
  &= \lim_{n \rightarrow \infty} \mathbb{E} [\gamma (\xi^{R,n} |_{[0, s]}
     \nobracket, {W}^{n}|_{[r, s]},\mathbb{W}^n |_{[r, s]} \nobracket) W^n_{st}] . \endaligned \]
  Since for every $n \in \bN$ the random variable $(\xi^{R,n} |_{[0, s]}
     \nobracket, {W}^{n}|_{[r, s]},\mathbb{W}^n |_{[r, s]} \nobracket)$ is measurable with
  respect to the $\sigma$-algebra $\mathcal{F}_{s + h^n}$, it can be written
  as a measurable function of $W |_{[0, s + h^n]} \nobracket$, say
  $$(\xi^{R,n} |_{[0, s]}
     \nobracket, {W}^{n}|_{[r, s]},\mathbb{W}^n |_{[r, s]} \nobracket) = F^n (W
  |_{[0, s + h^n]} \nobracket).$$
  Consequently,
  \begin{equation}
    \tilde{\mathbb{E}} [\gamma (\tilde{\xi}^{R}|_{[0, s]} \nobracket, \tilde W|_{[r,s]},
     \tilde{\mathbb{W}} |_{[r, s]} \nobracket) \tilde{W}_{st}] =
    \lim_{n \rightarrow \infty} \mathbb{E} [\gamma (F^n (W |_{[0, s + h^n]}
    \nobracket)) W^n_{st}] . \label{eq:11}
  \end{equation}
  Since the function $\gamma$ is bounded, the sequence $\gamma (F^n (W |_{[0,
  s + h^n]} \nobracket))$ is uniformly bounded in $n$. Hence there is a
  subsequence converging weak star in $L^{\infty} (\Omega)$. In addition, the
  limit denoted by $\Gamma_s$ is $\mathcal{F}_{s +}$-measurable since for
  every $h \in (0, 1)$ it is a weak star limit of $\mathcal{F}_{s +
  h}$-measurable functions, i.e., the weak star limit can be taken in
  $L^{\infty} (\Omega, \mathcal{F}_{s + h}, \mathbb{P})$. Due to right
  continuity of the filtration $(\mathcal{F}_t)_{t \geqslant 0}$ it follows
  that $\Gamma_s$ is $\mathcal{F}_s$-measurable.

  On the other hand, since $W^n$
  converges in every $L^q (\Omega)$, and thus is bounded therein uniformly in $n$, we obtain from \eqref{eq:11} by weak-strong convergence
  \[ \tilde{\mathbb{E}} [\gamma (\tilde{\xi}^{R}|_{[0, s]} \nobracket, \tilde W|_{[r,s]},
     \tilde{\mathbb{W}} |_{[r, s]} \nobracket) \tilde{W}_{st}]
     =\mathbb{E} [\Gamma_s  W_{st}] = 0, \]
  where the last equality follows from the martingale property of $W$ with
  respect to $(\mathcal{F}_t)_{t \geqslant 0}$. This shows that $t\mapsto\tilde{W}_{0t}$ is
  a $(\tilde{\mathcal{F}}_t)_{t \geqslant 0}$-martingale and hence a
  $(\tilde{\mathcal{F}}_t)_{t \geqslant 0}$-Brownian motion.

  It remains to identify $\tilde{\mathbb{W}}$ as the Stratonovich lift of
  $\tilde{W}$.
  More precisely, we want to prove that
  \[ \tilde{\mathbb{W}}^{\ell, k, \beta, \alpha}_{s t} = \int_s^t
     \tilde{W}^{\ell, \beta}_r \circ d \tilde{W}^{k, \alpha}_r -
     \tilde{W}^{\ell, \beta}_s \tilde{W}^{k, \alpha}_{s t} \]
  holds $\tilde{\mathbb{P}}$-a.s. for all $0 \leqslant s \leqslant t \leqslant
  T$.
  The right hand side can be rewritten in terms of an It{\^o} integral and the
  corresponding cross variation as follows
  \[ \tilde{\mathbb{W}}^{\ell, k, \beta, \alpha}_{s t} = \int_s^t
     \tilde{W}^{\ell, \beta}_r d \tilde{W}^{k, \alpha}_r + \frac{1}{2} \delta
     \llangle \tilde{W}^{\ell, \beta}, \tilde{W}^{k, \alpha} \rrangle_{s t} -
     \tilde{W}^{\ell, \beta}_s \tilde{W}^{k, \alpha}_{s t} = \int_s^t \tilde{W}^{\ell, \beta}_{s r} d \tilde{W}^{k, \alpha}_r +
     \frac{1}{2} (t - s) \delta_{k = \ell, \alpha = \beta} .\]
  In other words, regarding $s$ as an initial time the above says that $t
  \mapsto \tilde{\mathbb{W}}^{\ell, k, \beta, \alpha}_{s t}$ should solve an
  It{\^o} stochastic differential equation. Let us define the process
  \[ t \mapsto \tilde{M}_t \assign \tilde{\mathbb{W}}^{\ell, k, \beta,
     \alpha}_{s t} - \frac{1}{2} (t - s) \delta_{k = \ell, \alpha = \beta} .
  \]
  Once we prove that
  \begin{equation}
    \tilde{M}_t = \int_s^t \tilde{W}^{\ell, \beta}_{s r} d \tilde{W}^{k,
    \alpha}_r, \label{eq:12}
  \end{equation}
  holds $\tilde{\mathbb{P}}$-a.s. for all $0 \leqslant s \leqslant t \leqslant
  T$, the identification of $\tilde{\mathbb{W}}$ is complete.

  To this end, since we already know that $(\tilde W,\tilde{\mathbb{W}})$ equals in law to the
  Stratonovich lift $(W,\WW)$, we define analogously on the original probability space
  \[t\mapsto M_t \assign \mathbb{W}^{\ell, k, \beta, \alpha}_{s t} - \frac{1}{2}
     (t - s) \delta_{k = \ell, \alpha = \beta}\]
     and here we know that $\PP$-a.s.
     \begin{equation}\label{eq:15}
     M_t=\int_{s}^{t}W^{\ell,\beta}_{s r}d W^{k,\alpha}_{r}.
     \end{equation}
 We will use martingale arguments to deduce \eqref{eq:12} from \eqref{eq:15} and from the equality of joint laws. For $0 \leqslant s \leqslant \tau \leqslant \sigma < t \leqslant T$ and an arbitrary continuous function $\gamma :   C^{0}_{2} ([\tau, \sigma] ;
  \bR^m) \times C^{0}_2 ([\tau, \sigma] ; \bR^{m \times m}) \rightarrow [0,
  1]$,
  we obtain from the equality
  of joint laws of $(\tilde W,\tilde\WW)$ and $(W,\WW)$
  \[ \tilde{\mathbb{E}} [\gamma ( \tilde{W}|_{{[\tau,\sigma]}},\tilde{\mathbb{W}} |_{[\tau, \sigma]} \nobracket)
     (\tilde{M}_t - \tilde{M}_{\sigma})] =
     {\mathbb{E}} [\gamma ( {W}|_{{[\tau,\sigma]}},{\mathbb{W}} |_{[\tau, \sigma]} \nobracket)
     ({M}_t - {M}_{\sigma})]. \]
The right hand side vanishes due to \eqref{eq:15} and accordingly,
$\tilde{M}$ is a martingale with respect to the filtration generated by $(\tilde W,\tilde\WW)$.  In order to deduce that $\tilde{M}$ is equal to the stochastic integral
  {\eqref{eq:12}}, it remains to identify its quadratic variation as the cross variation with the driving
  process $\tilde{W}^{k, \alpha}$. Proceeding by the same arguments we deduce
  \[ \tilde{\mathbb{E}} \left[ \gamma ( \tilde{W}|_{{[\tau,\sigma]}},\tilde{\mathbb{W}} |_{[\tau, \sigma]} \nobracket) \left(
     \tilde{M}_t^2 - \tilde{M}_{\sigma}^2 - \int_{\sigma}^t (\tilde{W}^{\ell,
     \beta}_{s r})^2 d r \right) \right] = 0,\]
      \[ \tilde{\mathbb{E}} \left[ \gamma ( \tilde{W}|_{{[\tau,\sigma]}},\tilde{\mathbb{W}} |_{[\tau, \sigma]} \nobracket) \left(
     \tilde{M}_t \tilde{W}^{k, \alpha}_t - \tilde{M}_{\sigma} \tilde{W}^{k,
     \alpha}_{\sigma} - \int_{\sigma}^t \tilde{W}^{\ell, \beta}_{s r} d r
     \right) \right] = 0, \]
in other words,
  \begin{equation*}
    \llangle \tilde{M} \rrangle_t = \int_s^t (\tilde{W}^{\ell, \beta}_{s r})^2dr,\qquad \llangle \tilde{M}, \tilde{W}^{k, \alpha}_{} \rrangle_t = \int_s^t
     \tilde{W}^{\ell, \beta}_{s r} d r.
  \end{equation*}
Therefore,
  \[ \llangle \tilde{M} - \int_s^. W^{\ell, \beta}_{s r} d W^{k, \alpha}_r
     \rrangle_t = \llangle \tilde{M} \rrangle_t - 2 \llangle \tilde{M},
     \int_s^. W^{\ell, \beta}_{s r} d W^{k, \alpha}_r \rrangle_t + \llangle
     \int_s^. W^{\ell, \beta}_{s r} d W^{k, \alpha}_r \rrangle_t = 0, \]
  which finally implies that for every $0 \leqslant s \leqslant T$ the
  equality {\eqref{eq:12}} holds $\tilde{\mathbb{P}}$-a.s. for all $s
  \leqslant t \leqslant T$. So far the associated set of full probability
  depends on $s$, however, by continuity of the involved quantities in $s$ the
  desired result follows.

  Therefore, we have proved that $(\tilde{W}^n, \tilde{\WW}^n)$ converges
  to $(\tilde{W}, \tilde{\WW})$ in the rough path topology
  $\tilde{\mathbb{P}}$-a.s., where the iterated integral $\tilde{\mathbb{W}}$
  is the Stratonovich lift of the Brownian motion $\tilde{W}$. This permits to
  identify $\tilde{\xi}^R$ as a solution to the stochastic equation
  {\eqref{SPDE with cut-off}}. Indeed, fix a test function $\phi \in
  C^{\infty} \cap H$ and $\tilde{\omega} \in \tilde{\Omega}$ from the set of
  full probability $\tilde{\mathbb{P}}$ where the convergences above hold. In
  particular, we have $(\tilde{W} (\tilde{\omega}), \tilde{\WW}
  (\tilde{\omega})) \in \mathcal{X}_{\tmop{RP}}$. From {\eqref{expansion
  equation for tilde}} we see that $(\langle \tilde{\xi}^R (\tilde{\omega}),
  \phi \rangle, \langle \tilde{\xi}^R (\tilde{\omega}), - \mathrm{div}
  (\sigma_{k, \alpha} \phi) \rangle)$ is a controlled rough path corresponding to $(\tilde{W} (\tilde{\omega}), \tilde{\WW}
  (\tilde{\omega}))$. Hence in view of the adaptedness of
  $\tilde{\xi}^R$ to $(\tilde{\mathcal{F}}_t)_{t \geqslant 0},$ it follows
  from Corollary 5.2 in {\cite{FrHa14}} that $\tilde\PP$-a.s.
  \[ \frac{C_{\nu}}{\| \theta^N \|_{\ell^2}}  \sum_{k \in \mathbb{Z}_0^3}
     \sum_{\alpha = 1}^2  \theta_k^N  \int_s^t \big\langle \tilde{\xi}^R_r,
     - \mathrm{div} (\sigma_{k, \alpha} \phi) \big\rangle \circ d \tilde{W}_r^{k,
     \alpha} (\tilde{\omega})\]
     \[
      = \big\langle \tilde{\xi}^R_r (\tilde{\omega}),
     \tilde{A}_{st}^{1, \ast}(\tilde\omega) \phi + \tilde{A}_{st}^{2, \ast}(\tilde\omega) \phi \big\rangle +
     \big\langle \tilde{\xi}^{R, \natural}_{st} (\tilde{\omega}), \phi \big\rangle
     \]
  so that $\tilde{\xi}^R$ satisfies {\eqref{SPDE with cut-off}}.

  \tmtextit{Step 4: Convergence on the original probability space.} Since
  pathwise uniqueness holds true for {\eqref{SPDE with cut-off}}, it is
  possible to deduce that the original sequence of approximate solutions
  $\xi^{R, n}$ converges in probability on the original probability space
  $(\Omega, \mathcal{F}, \mathbb{P})$. This is the classical Yamada--Watanabe
  argument, which can be established using the Gy{\"o}ngy--Krylov lemma, see
  e.g. Section 2.10 in {\cite{BrFeHobook}} and an application of this method
  in Section 5.2.6 in {\cite{BrFeHobook}}. Then, by repeating the above limiting procedure
   on the original probability space, we deduce that the limit in
  probability, denoted by $\xi^R$, solves {\eqref{SPDE with cut-off}} on
  $(\Omega, \mathcal{F}, \mathbb{P})$. In fact, contrary to the identification
  of the limit on $(\tilde{\Omega}, \tilde{\mathcal{F}},
  \tilde{\mathbb{P}})$, the arguments simplifies significantly since no
  identification of the limiting rough path is necessary.
  This
  concludes the proof of the Wong--Zakai result.
\end{proof}

Thanks to Proposition \ref{prop-intermediate}, we can finally prove Theorem \ref{wong zakai in law}.

\begin{proof}[Proof of Theorem \ref{wong zakai in law}]
We follow the idea of the proof of Theorem 1.4 in \cite{FL19} and argue by contradiction. Assume that there exist an $\varepsilon_0>0$ and a subsequence $\{n_i\}_{i\geq 1}\subset \mathbb Z_+$ such that
  $$\lim_{i\to \infty} \sup_{\xi_0\in H, \|\xi_0\|_H\leq K} \mathbb{P} \Big(\big\|\xi_{R_K}^{N_0, n_i}(\cdot,\xi_0) - \xi_{R_K}^{N_0} (\cdot,\xi_0)\big\|_{C([0,T],H^{-\delta})} >\varepsilon_0 \Big)>0.$$
Then we can find a sequence $\{\xi^{n_i}_0 \}_{i\geq 1}\subset H$ such that $\|\xi^{n_i}_0 \|_H\leq K$ for all $i\geq 1$, and (choosing a smaller $\varepsilon_0>0$ if necessary)
  \begin{equation}\label{proof-1}
  \mathbb{P} \Big(\big\|\xi_{R_K}^{N_0, n_i}(\cdot,\xi^{n_i}_0) - \xi_{R_K}^{N_0} (\cdot,\xi^{n_i}_0)\big\|_{C([0,T],H^{-\delta})} >\varepsilon_0 \Big) \geq \varepsilon_0 >0.
  \end{equation}

Since the sequence $\{\xi^{n_i}_0 \}_{i\geq 1}\subset H$ is bounded, up to a subsequence, it converges weakly to some $\xi_0\in H$. We can repeat the proof of Proposition \ref{prop-intermediate} to show that, as $i\to \infty$, the sequence $\xi_{R_K}^{N_0, n_i}(\cdot,\xi^{n_i}_0)$ converges in probability in the topology of $C([0,T];H^{-\delta})$ to the solution $\xi_{R_K}^{N_0} (\cdot,\xi_0)$ of \eqref{SPDE with cut-off} with initial value $\xi_0$. Similarly, the other sequence $\xi_{R_K}^{N_0} (\cdot,\xi^{n_i}_0)$ converges also in probability in the topology of $C([0,T];H^{-\delta})$ to $\xi_{R_K}^{N_0} (\cdot,\xi_0)$, see for instance Lemma 4.1 in \cite{FL19} (or Corollary~3.5 therein). From these results and the following simple inequality:
  $$\aligned
  &\ \mathbb{P} \Big(\big\|\xi_{R_K}^{N_0, n_i}(\cdot,\xi^{n_i}_0) - \xi_{R_K}^{N_0} (\cdot,\xi^{n_i}_0)\big\|_{C([0,T],H^{-\delta})} >\varepsilon_0 \Big) \\
  \leq &\ \mathbb{P} \Big(\big\|\xi_{R_K}^{N_0, n_i}(\cdot,\xi^{n_i}_0) - \xi_{R_K}^{N_0} (\cdot,\xi_0) \big\|_{C([0,T], H^{-\delta})} >\frac{\varepsilon_0}2 \Big) + \mathbb{P} \Big(\big\|\xi_{R_K}^{N_0}(\cdot,\xi^{n_i}_0) - \xi_{R_K}^{N_0} (\cdot,\xi_0) \big\|_{C([0,T], H^{-\delta})} >\frac{\varepsilon_0}2 \Big),
  \endaligned $$
we immediately get a contradiction with \eqref{proof-1}. Thus we complete the proof of Theorem \ref{wong zakai in law}.
\end{proof}

\bibliographystyle{alpha}
\bibliography{bibliography}

\begin{thebibliography}{BDLSV19}

\bibitem[BCV18]{BuCoVi}
T.~Buckmaster, M.~Colombo, and V.~Vicol.
\newblock Wild solutions of the {N}avier--{S}tokes equations whose singular
  sets in time have {H}ausdorff dimension strictly less than 1.
\newblock {\em arXiv e-prints}, arXiv:1809.00600, 2018.

\bibitem[BDLSV19]{BDSV}
T.~Buckmaster, C.~De~Lellis, L.~Sz\'{e}kelyhidi, Jr., and V.~Vicol.
\newblock Onsager's conjecture for admissible weak solutions.
\newblock {\em Comm. Pure Appl. Math.}, 72(2):229--274, 2019.

\bibitem[BFH18]{BrFeHobook}
D.~Breit, E.~Feireisl, and M.~Hofmanov{\'a}.
\newblock {\em Stochastically forced compressible fluid flows}.
\newblock De {G}ruyter Series in Applied and Numerical Mathematics 3. De
  {G}ruyter, Berlin, 2018.

\bibitem[BG17]{BaGu15}
I.~Bailleul and M.~Gubinelli.
\newblock {Unbounded rough drivers}.
\newblock {\em {Annales de la Facult{\'e} des Sciences de Toulouse.
  Math{\'e}matiques.}}, 26(4):795--830, 2017.

\bibitem[BV19]{BuVi}
T.~Buckmaster and V.~Vicol.
\newblock Nonuniqueness of weak solutions to the {N}avier-{S}tokes equation.
\newblock {\em Ann. of Math. (2)}, 189(1):101--144, 2019.

\bibitem[DGHT19]{DeGuHoTi16}
A.~Deya, M.~Gubinelli, M.~Hofmanov{\'a}, and S.~Tindel.
\newblock A priori estimates for rough {PDE}s with application to rough
  conservation laws.
\newblock {\em J. Funct. Anal.}, 276(12):3577--3645, 2019.

\bibitem[DLS09]{DelSze2}
C.~De~Lellis and L.~Sz{\'e}kelyhidi, Jr.
\newblock The {E}uler equations as a differential inclusion.
\newblock {\em Ann. of Math. (2)}, 170(3):1417--1436, 2009.

\bibitem[DLS10]{DelSze3}
C.~De~Lellis and L.~Sz{\'e}kelyhidi, Jr.
\newblock On admissibility criteria for weak solutions of the {E}uler
  equations.
\newblock {\em Arch. Ration. Mech. Anal.}, 195(1):225--260, 2010.

\bibitem[DLS13]{DelSze13}
C.~De~Lellis and L.~Sz{\'e}kelyhidi, Jr.
\newblock Dissipative continuous {E}uler flows.
\newblock {\em Invent. Math.}, 193(2):377--407, 2013.

\bibitem[FH14]{FrHa14}
P.~K. Friz and M.~Hairer.
\newblock {\em A course on rough paths: with an introduction to regularity
  structures}.
\newblock Universitext. Springer, Cham, 2014.

\bibitem[FL19]{FL19}
Franco Flandoli and Dejun Luo.
\newblock High mode transport noise improves vorticity blow-up control in 3d
  navier-stokes equations.
\newblock {\em arXiv preprint: 1910.05742}, 2019.

\bibitem[HLN19a]{HLN}
M.~Hofmanov{\'a}, J.-M. Leahy, and T.~Nilssen.
\newblock On the {N}avier-{S}tokes equation perturbed by rough transport noise.
\newblock {\em J. Evol. Equ.}, 19(1):203--247, 2019.

\bibitem[HLN19b]{HLN19}
Martina Hofmanova, James-Michael Leahy, and Torstein Nilssen.
\newblock On a rough perturbation of the navier-stokes system and its vorticity
  formulation.
\newblock {\em arXiv preprint: 1902.09348}, 2019.

\bibitem[Ise18]{Ise}
P.~Isett.
\newblock A proof of {O}nsager's conjecture.
\newblock {\em Ann. of Math. (2)}, 188(3):871--963, 2018.

\bibitem[IXZ19]{Iyer}
G.~Iyer, X.~Xu, and A.~Zlato{\v s}.
\newblock {C}onvection-induced singularity suppression in the {K}eller-{S}egel
  and other non-linear pdes.
\newblock {\em arXiv e-prints}, arXiv:1908.01941v2, 2019.

\bibitem[Tem83]{RT83}
R.~Temam.
\newblock {\em {N}avier-{S}tokes equations and nonlinear functional analysis},
  volume~41 of {\em CBMS-NSF Regional Conference Series in Applied
  Mathematics}.
\newblock Society for Industrial and Applied Mathematics (SIAM), Philadelphia,
  PA, 1983.

\end{thebibliography}

\end{document}